\title{The Pompeiu problem for isoparametric foliations}
\title{Isoparametric foliations and the Pompeiu problem}
\author{Luigi Provenzano and Alessandro Savo}
\date{}
\newtheorem{defi}{Definition}
\newtheorem{thm}[defi]{Theorem}
\newtheorem{theorem}[defi]{Theorem}
\newtheorem{example}[defi]{Example}
\newtheorem{rem}[defi]{Remark}
 \newtheorem{prop}[defi]{Proposition}
\newtheorem{lemma}[defi]{Lemma}
\newtheorem{cor}[defi]{Corollary}
\newcommand{\twosystem}[2]{\left\{\begin{aligned} &#1\\ &#2\end{aligned}\right.}
\newcommand{\matrice}{\begin{pmatrix}}
\newcommand{\ok}{\end{pmatrix}}
\newcommand{\scal}[2]{\langle{#1},{#2}\rangle}
\newcommand{\abs}[1]{\lvert{#1}\rvert}
\newcommand{\reals}{{\mathbb R}}
\newcommand{\sphere}[1]{{\mathbb S}^{#1}}
\newcommand{\real}[1]{{\mathbb R}^{#1}}
\newcommand{\bd}{\partial}
\newcommand{\derive}[2]{\dfrac{\bd #1}{\bd#2}}
\begin{document}
\maketitle
\large
\makebox[\textwidth][c]{
\begin{minipage}{14cm}
{\centerline{\small\bf Abstract} }
\smallskip
\begin{spacing}{0.8}
{\small A bounded domain $\Omega$ in a Riemannian manifold $M$ is said to have the Pompeiu property if the only continuous function which integrates to zero on $\Omega$ and on all its congruent images is the zero function. In some respects, the Pompeiu property can be viewed as an overdetermined problem, given its relation with the Schiffer problem. It is well-known that every Euclidean ball fails the Pompeiu property while spherical balls have the property for almost all radii (Ungar's Freak theorem).  In the present paper we discuss the Pompeiu property when $M$ is compact and admits an isoparametric foliation. In particular, we identify precise conditions on the spectrum of the Laplacian on $M$ under which the level domains of an isoparametric function fail the Pompeiu property. Specific calculations are carried out when the ambient manifold is the round sphere, and some consequences are derived. Moreover, a detailed discussion of  Ungar's Freak theorem and its generalizations is also carried out.}

\medskip

{\small{\bf Keywords:}  Pompeiu problem, isoparametric foliation, radial spectrum.}

\smallskip
{\small{\bf 2020 Mathematics Subject Classification:} 58J50, 58C40, 35N25, 53C65.}
\end{spacing}
\end{minipage}
}

\section{Introduction}

\subsection{Historical facts} Let $(M,g)$ be a Riemannian manifold with isometry group $G$. 
A bounded domain $\Omega$ of $M$ is said to have the {\it Pompeiu property} if the only continuous function $f$ on $\Omega$ such that
$$
\int_{h(\Omega)}f=0
$$
for all $h\in G$ is the identically zero function $f=0$. Here the integral is taken with respect  to the induced Riemannian measure. Equivalently, consider the linear map
$$
\mu: C^0(M)\to C^0(G), \quad f\mapsto\mu_f
$$
where 
$
\mu_f(h)\doteq \int_{h(\Omega)}f
$. 
Then, $\Omega$ has the Pompeiu property if and only if $\mu$ is injective.

\smallskip

Pompeiu thought that every Euclidean domain has the property that later would bring his name, and actually provided a (wrong) proof of this fact in 1929, see \cite{pompeiu}. Some years later, in 1944, Chakalov \cite{chakalov} showed that every Euclidean ball fails the Pompeiu property. Other examples of domains failing the property are obtained by removing balls of suitable radii from a larger ball, but no other examples were found so far. Hence the following conjecture is still open, to the best of our knowledge:

\smallskip

{\bf Pompeiu conjecture.} {\it Let $\Omega$ be a Euclidean domain with connected boundary. If $\Omega$  fails  the Pompeiu property, then it is a ball}.

\smallskip

The Pompeiu property can be viewed as an overdetermined problem; in fact, at least in $\real n$, the Pompeiu conjecture is equivalent to the so-called: 

\smallskip
{\bf Schiffer conjecture.}
{\it Let $\Omega\subseteq\real n$ be a domain with connected boundary which supports a non-trivial solution to the overdetermined PDE, called {\rm Schiffer problem}:
$$
\twosystem
{\Delta u=\lambda u\quad\text{on $\Omega$}}
{\derive uN=0, \quad u=c\quad\text{on $\bd\Omega$}}
$$
for some $\lambda>0$. Then $\Omega$ is ball.}

\smallskip

Here $\Delta$ is the positive Laplacian (in $\real n$: $\Delta u=-\sum_j\partial^2_{jj}u$).

The equivalence was proved by Williams \cite{williams1,williams2}, see also Berenstein \cite{berenstein0}. Schiffer conjecture is still open, although it was proved under various additional conditions; for a thorough exposition we refer to the survey papers \cite{dalmasso_pompeiu,zalcman,zalcman_bib}. 

\smallskip

The scope of this paper is to explore the Pompeiu problem on other Riemannian manifolds; in particular, on closed manifolds supporting isoparametric foliations, see Subsection \ref{sub:isof} for the relevant definitions. We will then focus on the round sphere, where a complete classification of isoparametric foliations is now available, see Section \ref{sec:sphere}. The reason why we study the Pompeiu property in terms of isoparametric foliations is because of their role in overdetermined PDE's, recently clarified in the papers \cite{savo_heat_flow,savo_geometric_rigidity,savo_heat_cont,shklover}. 

\smallskip

Perhaps the first significant result in a manifold different from $\real n$ was obtained by Ungar \cite{ungar} in 1954, who proved the following fact. Here $B(r)$ denotes the geodesic ball in $\sphere 2$ centered at any chosen point $x_0$ and having radius $r$:

\smallskip

{\bf Freak theorem.} {\it The set $S\subseteq (0,\pi)$ of radii $r$ for which $B(r)$ fails the Pompeiu property is countable and dense in $(0,\pi)$. 
In particular, for any $r$ in the complement of $S$, the ball $B(r)$ has the Pompeiu property.} \smallskip 

The word {\it freak} suggests that the result is quite surprising; in fact a consequence of the theorem is that a geodesic ball $B(r)$  in $\sphere 2$ has the Pompeiu property with probability $1$ for $r\in (0,\pi)$. Note the striking difference with the Euclidean case, where every ball fails the Pompeiu property, so that the set of radii $r$ for which a ball has the Pompeiu property is actually empty. 
The difference could be perhaps justified by the fact that $\sphere 2$ is compact, see Section \ref{A} for further discussions in dimension $1$. Later the result was extended to $\sphere n$ and more generally to compact symmetric spaces of rank one (see \cite{berenstein_zalcman_1,berenstein_zalcman_2}). We will reprove this in Section \ref{sec:freak}. 

\smallskip

It should be said that Ungar constructed other spherical domains which do not have the Pompeiu property: these are suitable polygonal regions with an appropriate number of edges. Thus, the Pompeiu conjecture fails on $\sphere 2$
(in fact, there are many spherical domains with smooth boundary failing the property, see Section \ref{sec:even}).

\smallskip

In the rest of the introduction we will briefly introduce the isoparametric foliations and other essential terminology, and state the main results. More details on the preliminaries will be given in Section \ref{sec:pre}.

\subsection{Isoparametric foliations} 

In this paper, we focus on compact Riemannian manifolds $M$ endowed with an isoparametric function, i.e., a smooth function $F:M\to [a,b]$ such that:
\begin{equation}\label{iso}
\begin{cases}
{\abs{\nabla F}^2=A\circ F}\\
{\Delta F=B\circ F}
\end{cases}
\end{equation}
for smooth functions $A,B:[a,b]\to \reals$. We call the foliation $\mathcal F$ of $M$ given by the level sets of $F$, namely
\begin{equation}\label{fol}
\mathcal F:\ M=\bigcup_{t\in[a,b]}F^{-1}(t),
\end{equation}
an {\it isoparametric foliation} of $M$. We will consider the pair $(M,\mathcal F)$. The regular level sets of $F$, that is, the sets $F^{-1}(t)$ for $t\in (a,b)$, are called {\it isoparametric hypersurfaces} of $M$. The first condition in \eqref{iso} insures that  these hypersurfaces are all parallel to one another, and the second says that they all have constant mean curvature. 

\smallskip

The  sets $M^+:=F^{-1}(a), M^-:=F^{-1}(b)$ are smooth, closed submanifolds of codimension possibly higher than $1$. They are called the {\it focal sets} of $\mathcal F$, and are always minimal in $M$. They are the singular leaves of the foliation. 

\smallskip

For simplicity, we assume in fact that the $M^+$ and $M^-$ have codimension greater than one (i.e., the foliation is {\it proper}): this will ensure that all leaves of the foliation, including the singular leaves, are connected. However, the main statements of the paper hold without this assumption. 

\smallskip

An immediate example of isoparametric foliation on $\sphere n$ is that given by concentric geodesic spheres centered at a fixed point $x_0$: in this case the focal sets are simply the north and south pole $\{x_0\}$ and $\{-x_0\}$. This can be generalized to compact {\it harmonic manifolds}  which, more or less by definition, are foliated by (smooth) geodesic spheres with constant mean curvature, centered at any fixed point.

\smallskip

We restrict to compact manifolds because we will work with the spectrum of the Laplacian on $M$, and we want it to be discrete in order to apply our arguments. Of course, isoparametric foliations exist also on non-compact manifolds; it was proved by Cartan \cite{cartan1} and Segre \cite{segre} that the foliation of $\real n$ (or $\mathbb H^n$) by concentric spheres is the only isoparametric foliation with compact leaves (up to congruences). More details on isoparametric foliations will be given in Subsection \ref{sub:isof}.

\smallskip

On the contrary, the sphere supports many interesting isoparametric foliations which are not congruent to the standard foliation by concentric spheres. We recall here that a hypersurface $\Sigma$ of $\sphere n$ is isoparametric (i.e., it belongs to an isoparametric family) if and only if it has constant principal curvatures, meaning that the characteristic polynomial of the second fundamental form is the same at all points of $\Sigma$.

\smallskip

After many intermediate results, the classification of (proper) isoparametric foliations of the sphere was completed only recently \cite{chi}. It turns out that these are classified in terms of the number $g$ of distinct principal curvatures of any of its leaves, which can only be $1,2,3,4,6$. Further details will be given in Section \ref{sec:sphere}. 

\smallskip

To better study the geometry of the foliation, we can re-normalize $F$ and consider in its place the distance function to $M^+$, which we denote by
\begin{equation}\label{distance}
\rho:M\to [0,D], \quad \rho(x):={\rm dist}(x, M^+),
\end{equation}
where $D={\rm dist}(M^+,M^-)$. We will call $D=D(\mathcal F)$ the {\it diameter} of the foliation: it is the maximal distance between two leaves.  When the foliation is the standard one by concentric geodesic spheres, $D(\mathcal F)$ is in fact the diameter of the manifold. 

\smallskip

The function $\rho$ is smooth on $M\setminus\{M^+\cup M^-\}$. Thus, the leaves of the foliations are also given by the level sets of $\rho$, see \eqref{fol2}. We refer to Subsection \ref{sub:dist} for more details on the distance function from a focal set.

%%%

\subsection{Isoparametric tubes and radial spectrum}  Given an isoparametric foliation $\mathcal F$ and $t\in (0,D(\mathcal F))$, the level domain
$$
\Omega_t=\{x\in M:\rho(x)<t\}
$$
is called an {\it isoparametric tube} of $\mathcal F$: it is the set of points at distance less than $t$ to the focal set $M^+$. Note that when the focal set is a point (i.e., the leaves of the foliations are geodesic spheres), an isoparametric tube is simply a geodesic ball. Needless to say, one could choose the focal set $M^-$ instead: therefore, the interior of the complement of an isoparametric tube is itself an isoparametric tube as well. 

\smallskip

In \cite{savo_heat_cont} we defined {\it isoparametric tube} any Riemannian domain which is a smooth, solid tube around a smooth, compact submanifold $\Sigma$, with the additional property that any equidistant from $\Sigma$ is a smooth hypersurface with constant mean curvature. This definition is slightly more general than the one adopted in this paper, because it allows $\Sigma$ to have codimension $1$ (at any rate, $\Sigma$ has to be minimal).  For a complete classification in the round sphere, see \cite{savo_heat_cont}. This class of Riemannian domains turns out to coincide with the class of domains supporting  solutions to  certain overdetermined problems involving the heat equation (in the general Riemannian case); in particular, it coincides with the class of domain which are critical for the heat content functional (see \cite[Theorem 4]{savo_heat_cont}). In this paper we have chosen to restrict a bit the definition for the sake of concreteness; however the main results carry over without change.

\smallskip

The main point we want to make is that very often, under certain conditions, isoparametric tubes on a compact manifold  {\it fail}  the Pompeiu property, pretty much as balls  in the Euclidean case. 

To explain such conditions, which will involve the spectrum of the Laplacian on $M$, we single out the vector space of {\it radial functions}, that is, the functions which depend only on the distance to  the focal set $M^+$ of $\mathcal F$. Obviously this notion depends on the foliation  $\mathcal F$ chosen; any radial function can be expressed as
\begin{equation}\label{radial}
f=\psi\circ\rho,
\end{equation}
for some function $\psi:[0,D(\mathcal F)]\to\reals$. Let 
$$
0=\lambda_1<\lambda_2\leq \cdots\leq\lambda_k\leq\cdots\nearrow+\infty.
$$
be the spectrum of the Laplacian on $(M,g)$, each eigenvalue being repeated according to its finite multiplicity. We shall denote by ${\rm Spec}_1(M)$ the set of all the eigenvalues of the Laplacian without multiplicity, i.e., the set of all distinct (real) values assumed by the eigenvalues of the Laplacian. 
\begin{defi}[Radial eigenvalue]\label{radialeig}
Given an isoparametric foliation $\mathcal F$ on $M$, we say that an eigenvalue $\lambda$ is {\rm radial} if there exists a radial eigenfunction associated to $\lambda$. 
\end{defi}
Radial eigenfunctions are characterized by an ODE on the function $\psi$ in \eqref{radial}, see equation \eqref{ODE_f}; the radial eigenvalues form an infinite subset of ${\rm Spec}_1(M)$, denoted by ${\rm Spec}(M,\mathcal F)$, and are the eigenvalues of a one-dimensional Sturm-Liouville problem associated to $\mathcal F$ and defined on the interval $(0,D(\mathcal F))$.

%%%

\subsection{Main theorems, general case}

We are now in the position to state our main theorems. The following two results show that, associated to any isoparametric foliation $\mathcal F$ of $M$, there is always a countable dense subset $S(\mathcal F)\subseteq (0,D(\mathcal F))$ such the family of isoparametric tubes 
$$
\{\Omega_t: t\in S(\mathcal F)\}
$$
fails the Pompeiu property. Actually, one could replace $S(\mathcal F)$ by the whole interval $(0,D(\mathcal F))$ provided that the radial spectrum is a proper subset of ${\rm Spec}_1(M,g)$.

\smallskip

More in detail, we have:

\begin{thm}\label{firstmain} Let $(M,\mathcal F$) be an isoparametric foliation. Assume that ${\rm Spec}(M,\mathcal F)$ is a proper subset of ${\rm Spec}_1(M)$. Then every isoparametric tube of $\mathcal F$ fails the Pompeiu property. 
\end{thm}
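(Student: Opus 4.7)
\textbf{Proof plan for Theorem \ref{firstmain}.}

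The plan is to exhibit, for each isoparametric tube $\Omega_t$, a nonzero continuous function $f$ on $M$ with $\mu_f(h)=\int_{h(\Omega_t)}f=0$ for all $h\in G$. Since $\mathrm{Spec}(M,\mathcal F)\subsetneq \mathrm{Spec}_1(M)$, pick an eigenvalue $\lambda\in\mathrm{Spec}_1(M)\setminus\mathrm{Spec}(M,\mathcal F)$ and let $E_\lambda$ be its eigenspace. Any witness $f$ will be taken inside $E_\lambda$; the point is that $E_\lambda$ is invariant under the isometry group, so $f\circ h\in E_\lambda$ for every $h\in G$, and it will therefore suffice to show that \emph{every} $\phi\in E_\lambda$ has vanishing integral on $\Omega_t$.

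The main tool is the radialization operator $\phi\mapsto\phi^{\sharp}$ which assigns to a continuous function $\phi$ on $M$ the radial function whose value at level $\rho=s$ is the average of $\phi$ on the leaf $\rho^{-1}(s)$. I would record (or recall from the preliminaries) the two standard facts about $\sharp$ adapted to an isoparametric foliation:
\begin{enumerate}
\item[(i)] $\sharp$ commutes with the Laplacian on $M$, i.e.\ $\Delta(\phi^{\sharp})=(\Delta\phi)^{\sharp}$; this is a consequence of the fact that the leaves are parallel hypersurfaces of constant mean curvature, so that the Laplacian acting on a radial function reduces to the Sturm--Liouville operator defining $\mathrm{Spec}(M,\mathcal F)$;
\item[(ii)] for any set $\Omega\subseteq M$ which is saturated by leaves (i.e.\ a union of leaves), one has $\int_{\Omega}\phi=\int_{\Omega}\phi^{\sharp}$, because both sides equal $\int\!\!\int \phi$ computed by the coarea formula and $\phi^{\sharp}$ is the leafwise average.
\end{enumerate}

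Combining (i) with the choice of $\lambda$: for any $\phi\in E_\lambda$ the function $\phi^{\sharp}$ is a radial function satisfying $\Delta\phi^{\sharp}=\lambda\phi^{\sharp}$; if it were not identically zero it would be a radial eigenfunction with eigenvalue $\lambda$, contradicting $\lambda\notin\mathrm{Spec}(M,\mathcal F)$. Hence $\phi^{\sharp}\equiv 0$ for every $\phi\in E_\lambda$. Since each isoparametric tube $\Omega_t=\{\rho<t\}$ is saturated by leaves, property (ii) gives
$$
\int_{\Omega_t}\phi=\int_{\Omega_t}\phi^{\sharp}=0\qquad\text{for every }\phi\in E_\lambda.
$$
Now pick any nonzero $f\in E_\lambda$. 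For every $h\in G$ the function $f\circ h$ is smooth and satisfies $\Delta(f\circ h)=\lambda(f\circ h)$, hence $f\circ h\in E_\lambda$. Thus
$$
\mu_f(h)=\int_{h(\Omega_t)}f=\int_{\Omega_t}f\circ h=0\qquad\text{for every }h\in G,
$$
while $f\not\equiv 0$, showing that $\Omega_t$ fails the Pompeiu property.

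The only nontrivial step is (i), the commutation of radialization with $\Delta$; everything else is formal. I expect this has already been established in the preliminaries on isoparametric foliations (it is essentially the statement that radial functions form a Laplacian-invariant subspace, with the Laplacian restricting to a Sturm--Liouville operator on $(0,D(\mathcal F))$), so the proof reduces to assembling these ingredients in the order above.
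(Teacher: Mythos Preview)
Your proof is correct and follows essentially the same approach as the paper: both arguments rest on the commutation of the radialization operator with the Laplacian (established in the preliminaries), which forces the radialization of any $\lambda$-eigenfunction to vanish when $\lambda\notin\mathrm{Spec}(M,\mathcal F)$, and then the coarea formula converts this into vanishing of the integral over each tube. The only cosmetic difference is that the paper phrases the isometry step by radializing $f$ with respect to the congruent foliation $h\cdot\mathcal F$ (and invoking $\mathrm{Spec}(M,\mathcal F)=\mathrm{Spec}(M,h\cdot\mathcal F)$), whereas you pull back via $f\mapsto f\circ h$ and radialize with respect to the fixed $\mathcal F$; these are equivalent formulations of the same computation.
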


More precisely, let $f$ be an eigenfunction of $M$ associated to $\lambda\in {\rm Spec}_1(M)\setminus{\rm Spec}(M,\mathcal F)$. For all $t\in (0,D(\mathcal F))$, let $\Omega_t$ be the isoparametric tube bounded by $\rho^{-1}(t)$. Then,  for any isometry $h$ of $M$ one has:
$$
\int_{h(\Omega_t)}f=0.
$$

\smallskip

The second possibility is that ${\rm Spec}(M,\mathcal F)={\rm Spec}_1(M)$. Then also in this case we still have infinitely many isoparametric tubes failing the Pompeiu property. Precisely, consider the set
$S(\mathcal F)$, union of all zeroes of Sturm-Liouville eigenfunctions associated to $\mathcal F$. The set $S(\mathcal F)$ is countable and dense in $(0,D(\mathcal F))$, see \eqref{ct} for the precise definition. Then we have: 

\begin{theorem}\label{sf} Let $(M,\mathcal F$) be an isoparametric foliation.  Assume that ${\rm Spec}(M,\mathcal F)={\rm Spec}_1(M)$. Then, the isoparametric tube $\Omega_t$ fails the Pompeiu property for all $t\in 
S(\mathcal F)$.  
\end{theorem}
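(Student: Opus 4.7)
The plan is to exhibit, for each $t\in S(\mathcal F)$, a nonzero Laplace eigenfunction on $M$ that integrates to zero over $\Omega_t$ and all its congruent images. Under the hypothesis ${\rm Spec}(M,\mathcal F)={\rm Spec}_1(M)$, every distinct Laplace eigenvalue $\lambda$ admits a (scale-unique) radial eigenfunction $\psi_\lambda\circ\rho$. Consider
\[
c_\lambda(t):=\int_{\Omega_t}\psi_\lambda\circ\rho=\int_0^tV'(s)\,\psi_\lambda(s)\,ds,
\]
where $V'(s)=\abs{\rho^{-1}(s)}$ is the area of the parallel leaf. Applying Green's identity on $\Omega_t$ to $\psi_\lambda\circ\rho$, and using $\abs{\nabla\rho}=1$, gives $\lambda\,c_\lambda(t)=-V'(t)\,\psi_\lambda'(t)$. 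Thus $c_\lambda$ is itself an eigenfunction of a Sturm-Liouville problem associated to $\mathcal F$ (dual to the one defining the $\psi_\lambda$), and by \eqref{ct}, $S(\mathcal F)$ is precisely the union, over $\lambda\in{\rm Spec}(M,\mathcal F)$, of the zeros of $c_\lambda$ in $(0,D(\mathcal F))$; in particular, given $t\in S(\mathcal F)$ we may select some $\lambda>0$ with $c_\lambda(t)=0$.

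Fix such a $\lambda$ and pick any nonzero $f\in E_\lambda$. For every isometry $h$, the change of variables yields $\int_{h(\Omega_t)}f=\int_{\Omega_t}(f\circ h)$, and $f\circ h\in E_\lambda$ because $\Delta$ commutes with isometries. Thus it suffices to show that $\int_{\Omega_t} g=0$ for every $g\in E_\lambda$, i.e.\ that $\mathbf{1}_{\Omega_t}$ is $L^2(M)$-orthogonal to the whole of $E_\lambda$. Decompose $g=\alpha\,(\psi_\lambda\circ\rho)+g_{nr}$, with $\alpha\in\reals$ and $g_{nr}\in E_\lambda$ orthogonal in $L^2(M)$ to $\psi_\lambda\circ\rho$. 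The radial part contributes $\alpha\,c_\lambda(t)=0$ by the choice of $t$. The non-radial summand $g_{nr}$ is $L^2$-orthogonal to every radial eigenfunction $\psi_\mu\circ\rho$: to $\psi_\lambda\circ\rho$ by construction, and to $\psi_\mu\circ\rho$ with $\mu\ne\lambda$ by orthogonality of distinct Laplace eigenspaces. Under the hypothesis ${\rm Spec}(M,\mathcal F)={\rm Spec}_1(M)$ combined with classical Sturm-Liouville completeness in $L^2((0,D(\mathcal F)),V'\,ds)$, the family $\{\psi_\mu\circ\rho\}_\mu$ is complete in the radial subspace of $L^2(M)$; hence $g_{nr}\perp\mathbf{1}_{\Omega_t}$, giving $\int_{\Omega_t}g_{nr}=0$. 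Summing, $\int_{\Omega_t}g=0$, and the claim follows.

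The main obstacle is the completeness of $\{\psi_\mu\circ\rho\}_\mu$ in the radial $L^2$-subspace of $M$, and this is where the hypothesis ${\rm Spec}(M,\mathcal F)={\rm Spec}_1(M)$ is used decisively: it guarantees that no Laplace eigenvalue is missed by the radial family, so that standard Sturm-Liouville completeness on $(0,D(\mathcal F))$ transports to completeness in the radial subspace of $L^2(M)$. The density and countability of $S(\mathcal F)$ in $(0,D(\mathcal F))$ then follow from the classical oscillation theorem for Sturm-Liouville eigenfunctions applied to the dual problem of which the $c_\lambda$ are eigenfunctions.
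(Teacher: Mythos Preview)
Your argument is correct but follows a genuinely different route from the paper's. The paper fixes the \emph{radial} eigenfunction $\phi_k$ as the single test function and, for an arbitrary isometry $h$, shows that the content function $t\mapsto \int_{h(\Omega_t)}\phi_k$ satisfies the same Sturm--Liouville ODE \eqref{bvpsi} as $\Psi_k(t)=\int_{\Omega_t}\phi_k$; simplicity of the ODE eigenvalue then forces $\int_{h(\Omega_t)}\phi_k=c(h)\,\Psi_k(t)$, which vanishes when $\Psi_k(t)=0$. You instead pull the isometry back, $\int_{h(\Omega_t)}f=\int_{\Omega_t}f\circ h$, and prove the stronger fact that $\mathbf{1}_{\Omega_t}\perp E_\lambda$ in $L^2(M)$ via the radial/non-radial splitting: the non-radial component of any $g\in E_\lambda$ has zero radialization (equivalently, is orthogonal to every radial $L^2$ function, hence to $\mathbf{1}_{\Omega_t}$), while the radial component contributes $c_\lambda(t)=0$. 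Your approach is arguably more conceptual, making transparent that the obstruction is the orthogonality of $\mathbf{1}_{\Omega_t}$ to the whole eigenspace; the paper's approach highlights the ODE structure and connects directly to the alternative proof of Theorem~\ref{first_main} noted in Remark~\ref{alt_first_main}.

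One small correction: you say the hypothesis ${\rm Spec}(M,\mathcal F)={\rm Spec}_1(M)$ is used ``decisively'' to obtain completeness of $\{\psi_\mu\circ\rho\}_\mu$ in the radial subspace. It is not. That completeness is pure Sturm--Liouville theory (equivalently, Theorem~\ref{radial_spec} in the paper) and holds whether or not the radial spectrum exhausts ${\rm Spec}_1(M)$; the paper in fact remarks after its proof that the conclusion of Theorem~\ref{s_f} is valid without the hypothesis. In your write-up the hypothesis only serves as a convenient indexing (one radial eigenfunction per $\lambda\in{\rm Spec}_1(M)$), not as a logical ingredient.
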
 

\subsection{Freak theorem for two-point compact homogeneous spaces: a simpler proof}

One could ask if the converse of Theorem \ref{sf} holds, that is, is it true that if $t\notin S(\mathcal F)$ then $\Omega_t$ has the Pompeiu property? We don't have a proof of the converse, in general. We only remark here that, if $M$ is a compact two-point homogeneous space (or, equivalently, by classical results, a compact rank one symmetric space) and $\mathcal F$ is the standard foliation by geodesic spheres, then the converse is also true, see \cite{berenstein_zalcman_2}. We will actually give a simple proof of this fact based on a general Addition Formula (see Section \ref{sec:freak}). 

\smallskip

Here is the relevant statement:
\begin{theorem}\label{chs} Let $M$ be a compact two-point homogeneous space with diameter $D$.
There is a countable dense set $S\subset (0,D)$ such that the geodesic ball of radius $t$ fails the Pompeiu property  if $t\in S$ and has the Pompeiu property if  $t\in (0,D)\setminus S$.
\end{theorem}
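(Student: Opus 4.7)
The strategy is to reduce both directions of the theorem to Theorem \ref{sf} combined with two classical facts about compact two-point homogeneous spaces: the $G$-irreducibility of each Laplace eigenspace $E_\lambda$, and the existence, for every eigenvalue $\lambda$, of a radial (zonal spherical) function $\phi_\lambda\in E_\lambda$, unique up to scalar and normalized by $\phi_\lambda(x_0)=1$ at a chosen base point $x_0\in M$. Let $K\subset G$ be the isotropy of $x_0$ and let $\mathcal F_0$ denote the standard foliation of $M$ by concentric geodesic spheres centered at $x_0$. The existence of $\phi_\lambda$ for every $\lambda$ gives ${\rm Spec}(M,\mathcal F_0)={\rm Spec}_1(M)$, so Theorem \ref{sf} directly yields the forward implication with $S:=S(\mathcal F_0)\subset(0,D)$ countable and dense, and $B(t)=\Omega_t$ failing the Pompeiu property for every $t\in S$.

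For the converse, fix $t\in(0,D)\setminus S$ and suppose $f\in C^0(M)$ is non-zero with $\int_{h(B(t))}f=0$ for every $h\in G$. Equivalently, $f$ is orthogonal in $L^2(M)$ to the closed $G$-invariant subspace $V:=\overline{\mathrm{span}}\{\chi_{h(B(t))}:h\in G\}$. Because $G$ acts unitarily and each eigenspace $E_\lambda$ is $G$-irreducible, the orthogonal complement $V^\perp$ is a direct sum of full eigenspaces, $V^\perp=\bigoplus_{\lambda\in\Lambda}E_\lambda$, for some non-empty subset $\Lambda\subset{\rm Spec}_1(M)$. Hence for every $\lambda\in\Lambda$ and every $g\in E_\lambda$ one has $\int_{B(t)}g=0$.

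The decisive step is an \emph{Addition Formula}: for any $g\in E_\lambda$, the $K$-average
$$
(Ag)(z):=\int_K g(kz)\,dk
$$
is a $K$-invariant element of $E_\lambda$, hence equal to $g(x_0)\,\phi_\lambda$ by uniqueness and the normalization $\phi_\lambda(x_0)=1$. Since $B(t)$ is $K$-invariant, Fubini yields
$$
0=\int_{B(t)}g=\int_{B(t)}Ag=g(x_0)\int_{B(t)}\phi_\lambda.
$$
Taking $g=\phi_\lambda$ itself gives $\int_{B(t)}\phi_\lambda=0$ for every $\lambda\in\Lambda$. Integration by parts against the Sturm--Liouville equation $(\theta\psi_\lambda')'+\lambda\,\theta\,\psi_\lambda=0$ satisfied by the radial profile $\psi_\lambda$ of $\phi_\lambda$ (with $\theta(r)$ the area of $\partial B(r)$) identifies the zero set of $t\mapsto\int_{B(t)}\phi_\lambda$ with a discrete subset of $S$. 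Since $t\notin S$, no such $\lambda$ exists, so $\Lambda=\emptyset$, forcing $f\equiv0$.

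The only non-routine ingredient is the $G$-irreducibility of each eigenspace $E_\lambda$ (equivalently, the fact that $(G,K)$ is a Gelfand pair with multiplicity-free zonal spherical function), which is the defining feature of compact two-point homogeneous spaces and is precisely what allows one to reduce the Pompeiu condition on an entire eigencomponent of $f$ to the vanishing of the single radial integral $\int_{B(t)}\phi_\lambda$. The matching between zeros of $t\mapsto\int_{B(t)}\phi_\lambda$ and the Sturm--Liouville set $S(\mathcal F_0)$ from Theorem \ref{sf} is routine.
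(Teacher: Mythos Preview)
Your argument is correct and rests on the same two pillars as the paper's proof: (i) every eigenspace contains a unique (up to scalar) zonal function $\phi_\lambda$ radial about $x_0$, and (ii) the set $S=S(\mathcal F_0)$ is precisely the union of the zero sets of $t\mapsto\int_{B(t)}\phi_\lambda$. The forward direction is identical. For the converse, however, the two arguments are organized differently.

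The paper never invokes $G$-irreducibility of the eigenspaces nor an isotypic decomposition of $V^\perp$. Instead it proves the Addition Formula in the explicit kernel form
\[
\phi^{[y]}(x)=\sqrt{\tfrac{|M|}{m}}\sum_{j=1}^{m}u_j(x)u_j(y),
\]
uses it to expand $\chi_{h(B(t))}$ in a fixed orthonormal basis, and then computes $\int_{h(B(t))}f=\sum_{k,j}b_k a_{kj}u_{kj}(h(y_0))$ directly. Transitivity of $G$ on $M$ makes this an eigenfunction expansion vanishing identically, so each $a_{kj}=0$ since $b_k\ne0$ for $t\notin S$. Your route instead front-loads the representation theory: once $V^\perp$ is known to be a sum of \emph{full} eigenspaces $E_\lambda$, the conclusion $\int_{B(t)}\phi_\lambda=0$ is immediate (your $K$-averaging step is in fact redundant at that point --- you can simply take $g=\phi_\lambda\in E_\lambda\subset V^\perp$). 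What your approach buys is conceptual clarity via the Gelfand-pair structure; what the paper's approach buys is a slightly lighter hypothesis list (only uniqueness of the zonal function, from which irreducibility could be \emph{derived} but is never stated). Two minor remarks: the decomposition $V^\perp=\bigoplus_{\lambda\in\Lambda}E_\lambda$ tacitly uses that distinct eigenspaces are pairwise non-isomorphic as $G$-modules (multiplicity-freeness), which you should make explicit; and the ``integration by parts'' you mention is unnecessary --- the identification of the zero set with $S_k(\mathcal F_0)\subset S$ is simply the definition \eqref{ct}.
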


In fact, the set $S$ is actually $S(\mathcal F)$ where $\mathcal F$ is the foliation with focal set $M^+$ being a point.

\subsection{Main theorems, the round sphere} 

We discuss now the above results for the most relevant case: the family of isoparametric foliations on the round sphere $\sphere n$. Isoparametric foliations on spheres have been most studied in the literature, and are divided in five classes, according to the number $g=g(\mathcal F)$ of distinct principal curvatures of any of its leaves.  The following is a classical result of Munzner \cite{munzner1,munzner2}.

\begin{theorem} Let $(\sphere n,\mathcal F)$ be an isoparametric foliation with $g$ distinct principal curvatures. Then:
\begin{enumerate}[i)]
\item The only possible values of $g$ are $1,2,3,4,6$.

\item The diameter of $\mathcal F$  is $D(\mathcal F)=\dfrac{\pi}{g}$.
\end{enumerate}
\end{theorem}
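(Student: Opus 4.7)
The plan is to follow M\"unzner's classical approach in three steps: normalize $F$ and extend it to a polynomial on $\real{n+1}$, derive the diameter identity from a one-variable analysis, and invoke a topological count to restrict $g$. First, I would normalize $F$ so that its range is $[-1,1]$ with $M^+=F^{-1}(-1)$ and $M^-=F^{-1}(1)$, and define
$$
\tilde F(x):=\|x\|^g F\bigl(x/\|x\|\bigr),\qquad x\in\real{n+1}\setminus\{0\},
$$
where $g$ is the number of distinct principal curvatures of a regular leaf. Translating the isoparametric identities \eqref{iso} from $\sphere n$ to $\real{n+1}$ yields the Cartan--M\"unzner equations
$$
|\nabla\tilde F|^2=g^2\|x\|^{2g-2},\qquad \Delta\tilde F=\tfrac{1}{2}g^2(m_2-m_1)\|x\|^{g-2},
$$
where $m_1,m_2$ are the multiplicities of the principal curvatures, which must alternate along a geodesic normal to the leaves. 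A standard regularity argument then forces $g$ to be a positive integer and $\tilde F$ to be a homogeneous polynomial of degree $g$ on $\real{n+1}$ (the Cartan--M\"unzner polynomial).

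To obtain the diameter identity $D(\mathcal F)=\pi/g$, I would take a unit-speed geodesic $\gamma:\reals\to\sphere n$ that meets every leaf orthogonally, starting at $M^+$, and set $\psi(t):=F(\gamma(t))$. The first equation in \eqref{iso} gives $\psi'(t)^2=A(\psi)$, while the second, combined with the evolution of the mean curvature along parallel hypersurfaces in the sphere, leads to a closed autonomous second-order ODE whose solution, with the above normalization, is
$$
\psi(t)=-\cos(g t).
$$
In particular, $\psi=-1$ at $t=0$ and $\psi=+1$ at $t=\pi/g$, which is precisely $D(\mathcal F)=\pi/g$, proving (ii). A parallel analysis of the shape operator along $\gamma$, via its Riccati equation on the sphere, also shows that the distinct principal curvatures at any regular leaf are $\cot\bigl(\theta_1+(k-1)\pi/g\bigr)$ for $k=1,\dots,g$, with multiplicities that alternate between two values $m_1$ and $m_2$ (equal when $g$ is odd).

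The genuinely hard part is (i), the restriction $g\in\{1,2,3,4,6\}$, which is the deep topological core of M\"unzner's theorem. The strategy is to decompose $\sphere n=T(M^+)\cup T(M^-)$ as the union of two closed tubular neighborhoods of the focal sets, whose intersection deformation retracts onto any regular leaf. Each $T(M^\pm)$ is a disk bundle over a focal manifold of codimension $m_1+1$ or $m_2+1$ respectively, and combining the Gysin sequence with Mayer--Vietoris computes $H^\ast(\sphere n)$ in terms of the cohomology of the focal manifolds and the multiplicities. The requirement that the resulting graded ring agree with $H^\ast(\sphere n;\reals)$ (and with the mod-$2$ version of the same computation) produces an arithmetic condition that M\"unzner showed is met exactly when $g\in\{1,2,3,4,6\}$, ruling out in particular $g=5$ and every $g\ge 7$. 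I expect this cohomological step to be the main obstacle: reproducing it would be lengthy and technical, so I would cite \cite{munzner1,munzner2} for the details and devote the body of the proof to the extension procedure and the ODE analysis that give (ii) and the principal-curvature description.
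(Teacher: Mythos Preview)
The paper does not actually prove this theorem: it is stated in the introduction as ``a classical result of M\"unzner \cite{munzner1,munzner2}'' and is used throughout Section~\ref{sec:sphere} (for instance in the identity $F(x)=\cos(g\rho(x))$) purely as a quoted fact, with no argument supplied. So there is no ``paper's own proof'' to compare against.

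Your proposal is a faithful outline of M\"unzner's original argument, which is exactly what the paper cites. The three ingredients you isolate --- the homogeneous extension to a Cartan--M\"unzner polynomial satisfying $|\nabla\tilde F|^2=g^2\|x\|^{2g-2}$ and $\Delta\tilde F=\tfrac{1}{2}g^2(m_2-m_1)\|x\|^{g-2}$, the one-variable analysis along a normal geodesic giving $F(\gamma(t))=\pm\cos(gt)$ and hence $D(\mathcal F)=\pi/g$, and the cohomological computation via the disk-bundle decomposition $\sphere n=T(M^+)\cup T(M^-)$ forcing $g\in\{1,2,3,4,6\}$ --- are precisely the contents of \cite{munzner1} and \cite{munzner2} respectively. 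Your assessment that part (i) is the deep step and should simply be cited matches both the historical reality and the paper's own practice. One minor point of convention: the paper takes $M^+=F^{-1}(1)$ (so that $F=\cos(g\rho)$ rather than $-\cos(g\rho)$), opposite to your normalization, but this is immaterial.
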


It is possible to compute explicitly the radial spectrum for any of such classes. We prove the following:

\begin{theorem}\label{thmsphere} Let $(\sphere n,\mathcal F)$ be an isoparametric foliation with $g$ distinct principal curvatures. Then its radial spectrum is
$$
{\rm Spec}(\sphere n,\mathcal F)=\{gk(gk+n-1):k\in\mathbb N\}.
$$
\end{theorem}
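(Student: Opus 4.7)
The plan is to use the Cartan--Munzner polynomial description of $\mathcal F$ to make the radial eigenvalue problem algebraic. By a classical theorem of Munzner, there is a homogeneous polynomial $F$ of degree $g$ on $\reals^{n+1}$ such that $\mathcal F$ is the level-set foliation of $F|_{\sphere n}$ and such that
$$|\nabla F|^2_{\reals^{n+1}}=g^2|x|^{2g-2},\qquad \Delta_{\reals^{n+1}}F=c\,|x|^{g-2}$$
on $\reals^{n+1}$, for an explicit constant $c$ depending on the multiplicities of the principal curvatures. After normalizing $F$ so that $F(\sphere n)=[-1,1]$ and $M^\pm=F^{-1}(\pm 1)$, one has $F|_{\sphere n}=\cos(g\rho)$, so a smooth function on $\sphere n$ is radial with respect to $\mathcal F$ if and only if it factors through $F$. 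Restricting the two Munzner identities to $\sphere n$ (using Euler's identity $\partial_r F=gF$) yields the key formulas
$$|\nabla F|^2=g^2(1-F^2),\qquad \Delta F=g(g+n-1)F-c\quad\text{on }\sphere n.$$

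The main computation is then to apply the Laplacian to the monomials $F^j$. Using the product and chain rules together with the two identities above, one finds
$$\Delta F^j=gj(gj+n-1)\,F^j-cj\,F^{j-1}-g^2j(j-1)\,F^{j-2}\quad\text{on }\sphere n.$$
Consequently $\Delta$ preserves the $(k+1)$-dimensional subspace $\Pi_k=\mathrm{span}\{1,F,F^2,\dots,F^k\}$ of $C^\infty(\sphere n)$, with matrix lower triangular in this basis and with pairwise distinct diagonal entries $gj(gj+n-1)$ for $j=0,\dots,k$. Diagonalizing this restriction, $\Pi_k$ admits a basis of radial eigenfunctions with eigenvalues exactly $\{gj(gj+n-1):0\le j\le k\}$, and letting $k\to\infty$ yields the inclusion $\{gk(gk+n-1):k\in\N\}\subseteq {\rm Spec}(\sphere n,\mathcal F)$.

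For the reverse inclusion, I would argue by density. Since $F|_{\sphere n}$ takes all values in $[-1,1]$, the Weierstrass approximation theorem applied to the pushforward of the Riemannian volume measure under $F$ shows that $\bigcup_k\Pi_k$ is dense in $L^2_{\mathrm{rad}}(\sphere n)$. On the other hand, the spectral theorem decomposes $L^2_{\mathrm{rad}}(\sphere n)$ as the orthogonal direct sum of radial eigenspaces of $\Delta$; since $\bigcup_k\Pi_k$ is already a dense subspace spanned by eigenfunctions with eigenvalues in $\{gj(gj+n-1):j\in\N\}$, no other radial eigenvalues can occur, and the theorem follows. The main obstacle I anticipate is bookkeeping of the correct constants and sign conventions when computing $\Delta F^j$ from the two Munzner identities (the constant $c$ cancels out of the diagonal entries but controls the subdiagonal structure); once this computation is verified, the lower-triangular structure and the density argument deliver the result immediately.
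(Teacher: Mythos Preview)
Your proposal is correct and, in spirit, parallel to the paper's argument, but the execution is genuinely different and arguably cleaner. The paper performs the change of variable $x=\cos(g\rho)=F$ to convert the radial ODE \eqref{ODE_f} into a singular Sturm--Liouville problem on $(-1,1)$, then expands a candidate solution in a power series, derives recurrence relations for the coefficients, and shows (Part~2) that a bounded solution exists precisely for $\lambda=gk(gk+n-1)$ and is then a polynomial of degree $k$; a separate Stone--Weierstrass step (Part~3) rules out other eigenvalues. Your approach bypasses the one-dimensional ODE entirely: the single identity
\[
\Delta F^j=gj(gj+n-1)\,F^j-cj\,F^{j-1}-g^2j(j-1)\,F^{j-2}
\]
(which I checked against the paper's formulas \eqref{eq1}; your constants and signs are right) immediately exhibits $\Delta$ as lower triangular on $\Pi_k$ with the claimed diagonal, so the existence of polynomial radial eigenfunctions is pure finite-dimensional linear algebra. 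The converse via density of $\bigcup_k\Pi_k$ in $L^2_{\mathrm{rad}}$ is the same Stone--Weierstrass idea the paper uses. What your route buys is that it makes the paper's power-series recurrence and the delicate unboundedness analysis of non-polynomial solutions unnecessary; what the paper's route buys is an explicit Rodrigues-type formula \eqref{polynomial} for the eigenpolynomials, which you do not obtain directly.
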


It is well-known that ${\rm Spec}_1(\sphere n)$ is given by the collection $k(k+n-1)$ for $k\in\mathbb N$. One then  sees that ${\rm Spec}(M,\mathcal F)={\rm Spec}_1(M)$ if and only if $g=1$, that is, if and only if $\mathcal F$ is the foliation by concentric geodesic spheres. Therefore, as a consequence of Theorem \ref{firstmain}, we obtain:
\begin{cor}\label{corsphere}
Let $\mathcal F$ be an isoparametric foliation of $\mathbb S^n$ with $g\ne 1$. Then every isoparametric tube of $\mathcal F$ fails the Pompeiu property.
\end{cor}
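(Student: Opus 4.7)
The plan is to deduce the corollary directly from Theorem \ref{firstmain} together with Theorem \ref{thmsphere} and the classical knowledge of the Laplace spectrum of the round sphere. The only real content is to verify that when $g\neq 1$ the radial spectrum ${\rm Spec}(\mathbb S^n,\mathcal F)=\{gk(gk+n-1):k\in\mathbb N\}$ is a proper subset of ${\rm Spec}_1(\mathbb S^n)=\{k(k+n-1):k\in\mathbb N\}$.

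First, I would observe the trivial inclusion ${\rm Spec}(\mathbb S^n,\mathcal F)\subseteq {\rm Spec}_1(\mathbb S^n)$, obtained by setting $k'=gk$ in the formula of Theorem \ref{thmsphere}; this is consistent with the general fact that every radial eigenvalue must belong to the full spectrum. Then I would exhibit an eigenvalue in the complement. Since $g\in\{2,3,4,6\}$, the integer $k=1$ gives the eigenvalue $\lambda=n\in {\rm Spec}_1(\mathbb S^n)$, whereas the smallest nonzero element of ${\rm Spec}(\mathbb S^n,\mathcal F)$ is $g(g+n-1)>n$. More generally, any $k\in\{1,\dots,g-1\}$ yields an eigenvalue $k(k+n-1)$ belonging to ${\rm Spec}_1(\mathbb S^n)\setminus{\rm Spec}(\mathbb S^n,\mathcal F)$, so the inclusion is strict.

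Once the strict inclusion is established, Theorem \ref{firstmain} applies verbatim to $(\mathbb S^n,\mathcal F)$ and yields that every isoparametric tube $\Omega_t$, $t\in(0,\pi/g)$, fails the Pompeiu property. In fact, the more precise statement following Theorem \ref{firstmain} gives an explicit witness: any Laplace eigenfunction $f$ on $\mathbb S^n$ associated to an eigenvalue $k(k+n-1)$ with $1\leq k\leq g-1$ satisfies $\int_{h(\Omega_t)}f=0$ for every isometry $h\in O(n+1)$.

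There is essentially no obstacle here, since both main ingredients (Theorems \ref{firstmain} and \ref{thmsphere}) are assumed. The only conceptual point to make transparent is that the case $g=1$ is genuinely exceptional: it is the only value for which the radial formula $gk(gk+n-1)$ hits every integer $k'\geq 0$, so that the radial spectrum coincides with ${\rm Spec}_1(\mathbb S^n)$ and Theorem \ref{firstmain} cannot be invoked; in that case one must fall back on Theorem \ref{sf}, recovering the classical Freak theorem for geodesic balls discussed in Theorem \ref{chs}.
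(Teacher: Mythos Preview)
Your proposal is correct and follows essentially the same approach as the paper: combine Theorem~\ref{thmsphere} with the known spectrum of $\mathbb S^n$ to see that the radial spectrum is a proper subset when $g\neq 1$, then invoke Theorem~\ref{firstmain}. You supply more explicit justification for the strict inclusion (exhibiting $\lambda=n$, or more generally $k(k+n-1)$ for $1\le k\le g-1$, as missing from the radial spectrum) than the paper does, but the argument is the same.
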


The case $g=1$ reduces to (half of) Ungar's Freak Theorem (in dimension $n$). 
Shklover \cite{shklover} proves a particular case of Corollary \ref{corsphere}: by using special functions, and the relation of the Pompeiu problem with Schiffer overdetermined problem, he proves that the set of isoparametric tubes $\Omega_t$ failing the Pompeiu property is countable and dense in the interval $t\in (0,\frac{\pi}{g})$. Corollary \ref{corsphere} follows from Theorem \ref{firstmain}; on the sphere it improves Shklover's result, and is conceptually simpler.

Actually, when the ambient manifold is the sphere, we give a very simple argument showing that every antipodal invariant domain in $\sphere n$ fails the Pompeiu property 
(see  Proposition \ref{peven}): hence, as  isoparametric tubes with $g$ even are antipodal invariant, this argument reproves Corollary \ref{corsphere}  when $g=2,4,6$. 

\smallskip

A consequence of Corollary \ref{corsphere} is that the barycenter of every isoparametric hypersurface having $g\ne 1$ is always the origin. By standard min-max methods one then obtains the following upper bound on $\lambda_2(\Sigma)$, the first positive eigenvalue of the Laplacian on an isoparametric hypersurface $\Sigma$ (see Section \ref{yau}).

\begin{theorem}\label{thmyau}
Let $\Sigma$ be a connected isoparametric hypersurface of $\sphere n$ with $g>1$ distinct principal curvatures. Then 
$$
\lambda_2(\Sigma)\leq n-1.
$$
Equality holds if and only if $\Sigma$ is minimal. 
\end{theorem}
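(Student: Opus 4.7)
The plan is to use the $n+1$ ambient coordinate functions $x_1,\dots,x_{n+1}$ of $\reals^{n+1}\supset\sphere n$, restricted to $\Sigma$, as trial functions in the variational (Rayleigh) characterization of $\lambda_2(\Sigma)$. For the bound to be nontrivial, each $x_i\vert_\Sigma$ must have zero mean on $\Sigma$, equivalently the barycenter of $\Sigma$ must be at the origin. This is exactly the barycenter observation following Corollary~\ref{corsphere}: the first positive eigenvalue of $\sphere n$ is $n$, its eigenspace is spanned by the $x_i$, and by Theorem~\ref{thmsphere} the value $n$ does not belong to ${\rm Spec}(\sphere n,\mathcal F)=\{gk(gk+n-1):k\in\N\}$ when $g>1$. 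Theorem~\ref{firstmain} applied to $f=x_i$ then gives $\int_{\Omega_t}x_i\,dV=0$ for every $t$, and differentiating in $t$ via the coarea formula (using $\abs{\nabla\rho}=1$ on the regular part of $\mathcal F$) yields $\int_\Sigma x_i\,d\sigma=0$.

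Next, I would apply the Rayleigh inequality $\lambda_2(\Sigma)\int_\Sigma x_i^2\le\int_\Sigma\abs{\nabla^\Sigma x_i}^2$ to each mean-zero function $x_i\vert_\Sigma$ and sum over $i$. Two pointwise identities do the work: $\sum_i x_i^2=1$ holds on $\Sigma\subset\sphere n$, and $\sum_i\abs{\nabla^\Sigma x_i}^2=n-1$, the latter being the squared norm of the orthogonal projection of the standard Euclidean frame of $\reals^{n+1}$ onto $T_p\Sigma$, a subspace of dimension $n-1$. Summing yields
$$
\lambda_2(\Sigma)\abs{\Sigma}=\lambda_2(\Sigma)\int_\Sigma\Bigl(\sum_i x_i^2\Bigr)d\sigma\le\int_\Sigma\Bigl(\sum_i\abs{\nabla^\Sigma x_i}^2\Bigr)d\sigma=(n-1)\abs{\Sigma},
$$
and hence $\lambda_2(\Sigma)\le n-1$.

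For the equality case the key tool is the Beltrami-type identity $\Delta^\Sigma x_i=(n-1)x_i-(n-1)H\nu_i$, obtained by splitting the second fundamental form of $\Sigma\subset\reals^{n+1}$ into the $\sphere n$-contribution $H\nu$ and the spherical contribution $-\vec x$. If $\lambda_2(\Sigma)=n-1$, equality in each individual summand of the Rayleigh sum forces each non-zero $x_i\vert_\Sigma$ to be an $(n-1)$-eigenfunction of $\Delta^\Sigma$; by the Beltrami formula this gives $H\nu_i=0$ for every $i$, and since $\nu$ is a unit vector $H\equiv 0$, i.e., $\Sigma$ is minimal. Conversely, if $\Sigma$ is minimal, the Beltrami formula immediately exhibits the $x_i\vert_\Sigma$ as mean-zero $(n-1)$-eigenfunctions, so that $n-1$ appears in the positive spectrum of $\Sigma$; to upgrade this to $\lambda_2(\Sigma)=n-1$ one needs the known validity of Yau's conjecture for isoparametric minimal hypersurfaces of $\sphere n$ (Muto--Ohnita--Urakawa, Tang--Yan, and others). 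This converse is the main nontrivial input, since the pure min-max argument only delivers the upper bound and one implication of the equality statement.
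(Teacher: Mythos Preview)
Your argument is essentially identical to the paper's: it uses the coordinate functions $x_i$ as trial functions, invokes Theorem~\ref{thmsphere} and Theorem~\ref{firstmain} to get $\int_\Sigma x_i=0$, sums the Rayleigh quotients with $\sum_i x_i^2=1$ and $\sum_i\abs{\nabla^\Sigma x_i}^2=n-1$, and for the equality case cites Tang--Yan for the converse. The only cosmetic difference is that for the implication ``$\lambda_2(\Sigma)=n-1\Rightarrow\Sigma$ minimal'' you write out the Beltrami-type identity explicitly, whereas the paper simply invokes Takahashi's theorem---these are the same computation.
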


We  have equality in the theorem thanks to \cite{tang_yan_yauconj_2,tang_yan_yauconj_1}, which verifies Yau's conjecture  
for minimal isoparametric hypersurfaces.

In view of the above theorem, as a final remark, assume that $\Sigma_t=F^{-1}(t)$ belongs to the isoparametric foliation $\mathcal F$ defined by the Cartan polynomial $F$. Then,  it is reasonable to expect that $\lambda_2(\Sigma_t)$ is an increasing function for $t\in[0,t_{min}]$, where $\Sigma_{t_{min}}$ is the unique minimal representative of $\mathcal F$. We searched the literature for results like these, without success so far. 

\smallskip

The present paper is organized as follows. In Section \ref{sec:pre} we recall a few preliminary results on isoparametric functions and foliations, and introduce the radial spectrum. In Section \ref{sec:pompeiuprop} we prove Theorem \ref{firstmain} (see Subsection \ref{sub:sufficient}) and  Theorem \ref{sf} (see Subsection \ref{sub:density}). In Section \ref{sec:freak} prove Theorem \ref{chs}, namely Ungar's Freak Theorem for two-point compact homogeneous spaces. In Section \ref{sec:sphere} we discuss Theorems \ref{firstmain} and \ref{sf} in the case of the round sphere $\mathbb S^n$, and prove Theorem \ref{thmsphere} and Corollary \ref{corsphere}. In Section \ref{sec:even} we prove that the Pompeiu property fails on any antipodal invariant domain of the sphere. In Section \ref{yau} we prove Theorem \ref{thmyau}. Finally, in Section \ref{A} we collect a few elementary one-dimensional examples which allow an easier understanding of the results contained in the paper.

\section{Preliminary results}\label{sec:pre}
\subsection{Isoparametric functions on Riemannian manifolds}\label{sub:isof} 

We recall that an isoparametric function on a compact Riemannian manifold $(M,g)$ is a smooth function $F:M\rightarrow[a,b]$ such that $|\nabla F|^2=A\circ F$ and $\Delta F=B\circ F$ for some smooth functions $A,B:[a,b]\rightarrow \mathbb R$, see \eqref{iso}. The regular level sets of $F$ are called {\it isoparametric hypersurfaces}. For some historical background we refer to Cartan \cite{cartan1,cartan2}, Levi-Civita \cite{levi} and Segre \cite{segre}, among the first to have systematically studied this subject. 

\smallskip

We list here a few fundamental results on isoparametric functions, proved e.g., in \cite{wang_87}.

\begin{prop}\label{prop_1}
Let $(M,g)$ be a compact Riemannian manifold admitting an isoparametric function $F:M\rightarrow [a,b]$. Then:
\begin{enumerate}[i)]
\item the open interval $(a,b)$ consists of regular values of $F$;
\item the sets $M^+:=F^{-1}(a), M^-:=F^{-1}(b)$ are smooth, closed submanifolds called the {\rm focal sets} of $F$;
\item the regular level set $F^{-1}(t)$ are all  parallel and equidistant to both $M^+$ and $M^-$; they all have constant mean curvature.
\end{enumerate}
\end{prop}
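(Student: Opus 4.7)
My plan is to treat \eqref{iso} as a coupled ODE system along the gradient flow of $F$ and reduce all three assertions to a one-dimensional analysis of the functions $A,B$ on $[a,b]$.

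For (i), I observe that by $\abs{\nabla F}^2=A\circ F$ the critical set of $F$ equals $F^{-1}(\{t:A(t)=0\})$, so the critical values of $F$ are precisely the zeros of $A$. Since $M$ is compact, $F$ attains both extrema $a$ and $b$, which gives $A(a)=A(b)=0$; assertion (i) is then the statement $A>0$ on $(a,b)$. I would argue by contradiction: if $A(c)=0$ for some $c\in(a,b)$, then $A\geq 0$ forces an even-order zero at $c$ with $A'(c)=0$, so $\sqrt{A}$ is Lipschitz near $c$. A unit-speed integral curve $\gamma$ of $\nabla F/\abs{\nabla F}$ satisfies the autonomous ODE $u'=\sqrt{A(u)}$ for $u=F\circ\gamma$, and Lipschitz uniqueness together with the constant solution $u\equiv c$ forbids any trajectory from crossing the level $c$. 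Differentiating $\abs{\nabla F}^2=A(F)$ twice at an interior critical point $p\in F^{-1}(c)$ yields $\text{Hess}\,F(p)=0$, and iterating, combined with $\Delta F=B(F)$, propagates the vanishing of all jets of $F-c$ at $p$; together with the ODE-uniqueness rigidity this forces $F$ locally constant near $F^{-1}(c)$, contradicting that $F$ achieves all values in $[a,b]$.

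For (ii), once (i) is known the focal sets $M^{\pm}$ coincide with the full critical set of $F$. Near $M^+$, compactness of $M$ forces the zero of $A$ at $t=a$ to be of order exactly $1$: otherwise $\int_a^{t_0}dt/\sqrt{A(t)}$ would diverge, and as will be made explicit below the Riemannian distance from any regular level to $M^+$ would become infinite. Writing $A(t)\approx\alpha(t-a)$ near $a$ and integrating $d\rho/dt=1/\sqrt{A(t)}$ gives $F-a\approx \frac{\alpha}{4}\rho^2$, so $F$ has a non-degenerate Morse--Bott minimum along $M^+$; a standard Morse--Bott exponential chart then endows $M^+$ with a smooth closed submanifold structure, and symmetrically for $M^-$. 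For (iii), parallelism and equidistance follow directly from the gradient-flow ODE: since $u'=\sqrt{A(u)}$ depends only on $u$, the arc-length distance between $F^{-1}(t_1)$ and $F^{-1}(t_2)$ along any unit-gradient trajectory is the common value $\int_{t_1}^{t_2}du/\sqrt{A(u)}$, independent of the starting point, and letting $t_1\to a$ or $t_2\to b$ yields equidistance from the focal sets. Constant mean curvature then follows by writing $\nabla F=\sqrt{A(F)}\,N$ with $N$ the unit normal and computing the divergence, which gives an identity of the form
\begin{equation*}
\Delta F=-\frac{A'(F)}{2}+\sqrt{A(F)}\,H,
\end{equation*}
and combining with $\Delta F=B(F)$ solves $H$ as a smooth function of $F$ alone, hence constant on each regular level.

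The main technical obstacle is the contradiction step inside (i), namely ruling out interior critical values of $F$ in the smooth (as opposed to real-analytic) category. It is the only place where the two isoparametric identities must be used jointly and in a delicate way, combining the order-of-vanishing analysis of $A$, the jet-vanishing of $F-c$ at an interior critical point extracted from both equations, and ODE uniqueness along the gradient flow. Once this is settled, (ii) and (iii) are essentially formal consequences; a complete execution can be read off from \cite{wang_87}.
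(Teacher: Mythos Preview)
The paper does not prove Proposition~\ref{prop_1}; it is quoted as a known preliminary fact with a reference to \cite{wang_87}. So there is no ``paper's proof'' to compare against, only your independent sketch.

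Your outline for (ii) and (iii) is essentially correct and standard: the distance integral $\int dt/\sqrt{A(t)}$ controls the Riemannian distance between level sets, finiteness at the endpoints forces $A'(a)\neq 0$, $A'(b)\neq 0$, and the Morse--Bott/normal-exponential description of $M^{\pm}$ follows; the mean-curvature identity you write is the right one.

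The genuine weak point is your treatment of (i). The jet-vanishing route is both unnecessary and, as stated, not a proof: ``iterating'' the identities to kill all jets of $F-c$ at an interior critical point is not justified, and even if it were, infinite-order flatness of a smooth function at a point does not imply local constancy. Combining this with ``ODE-uniqueness rigidity'' does not close the gap either---trajectories not crossing the level $c$ says nothing about the local structure of $F$ near $F^{-1}(c)$.

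The fix is already in your hands: reuse in (i) the very argument you deploy in (ii). If $A(c)=0$ for some $c\in(a,b)$, then $A\geq 0$ forces $A'(c)=0$, hence $A(t)\leq C(t-c)^2$ near $c$ and $\int_{c-\varepsilon}^{c+\varepsilon} dt/\sqrt{A(t)}=\infty$. For any path $\gamma$ (arc-length parametrized) joining a point on $F^{-1}(c_1)$ to a point on $F^{-1}(c_2)$ with $c_1<c<c_2$, one has $\abs{(F\circ\gamma)'}\leq\abs{\nabla F}=\sqrt{A(F\circ\gamma)}$, so $\mathrm{length}(\gamma)\geq\int_{c_1}^{c_2}dt/\sqrt{A(t)}=\infty$. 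Since $M$ is compact and connected and $F$ attains both $a$ and $b$, such levels are nonempty, and this contradicts finiteness of the diameter. This uses only the transnormal condition $\abs{\nabla F}^2=A\circ F$, in line with \cite{wang_87}; the equation $\Delta F=B\circ F$ is needed only for the constant-mean-curvature part of (iii).
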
 

It turns out that $M^+,M^-$ are minimal submanifolds, see \cite{ge_tang_13,nomizu}. According to \cite{ge_tang_13} we have the following:

\begin{defi}[Proper isoparametric functions]
An isoparametric function $F:M\rightarrow[a,b]$ is said to be {\rm proper} if the focal sets have codimension at least $2$ in $M$.
\end{defi}
A proper isoparametric function satisfies the following additional properties.
\begin{prop}\label{prop_2}
Let $(M,g)$ be a compact Riemannian manifold admitting a proper isoparametric function $F:M\rightarrow [a,b]$. In addition to properties i)-iii) of Proposition \ref{prop_1} we have:
\begin{enumerate}[i)]
\item $M^+,M^-$ are connected as well as all the regular level sets;
\item at least one regular level set is a minimal hypersurface; if $M$ has positive Ricci curvature, it is unique.
\end{enumerate}
\end{prop}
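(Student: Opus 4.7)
The plan is to split Proposition~\ref{prop_2} into two independent arguments: a topological one for (i), relying on $\mathrm{codim}(M^{\pm})\geq 2$ and the normal exponential map, and an analytic one for (ii), based on the evolution of mean curvature along parallel leaves.

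For (i), I would first note that the gradient flow of $\rho$ provides, for every $t_1,t_2\in (0,D)$, a diffeomorphism $\rho^{-1}(t_1)\to \rho^{-1}(t_2)$, so all regular leaves have the same number of connected components. The key is to show that $M^+$ is connected (and, by symmetry, $M^-$). To this end, the normal exponential map identifies $M\setminus M^-$ with the open normal disk bundle of $M^+$, which deformation retracts onto $M^+$; since $\mathrm{codim}(M^-)\geq 2$ and $M$ is connected, the complement $M\setminus M^-$ is connected, hence so is $M^+$. Finally, for any $t\in (0,D)$, the map $(x,v)\mapsto \exp_x(tv)$ from the unit normal sphere bundle $SN(M^+)$ onto $\rho^{-1}(t)$ is a diffeomorphism, which realizes each regular leaf as the total space of an $S^{k-1}$-bundle ($k=\mathrm{codim}(M^+)\geq 2$) over the connected base $M^+$; since the fibers are connected, each regular leaf is connected.

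For (ii), let $H(t)$ be the constant mean curvature of $\rho^{-1}(t)$ with respect to the outward normal $\nabla\rho$, in the convention in which the sphere of radius $r$ in $\mathbb{R}^n$ has mean curvature $(n-1)/r$. I would use the classical tube formula to show that, as $t\to 0^+$, the $k-1$ principal curvatures of $\rho^{-1}(t)$ transverse to $M^+$ behave like $1/t$, while the tangential principal curvatures stay bounded, so that $H(t)\to +\infty$. The symmetric expansion near $M^-$ gives $H(t)\to -\infty$ as $t\to D^-$, and the intermediate value theorem then produces some $t_0\in (0,D)$ with $H(t_0)=0$. For uniqueness under $\mathrm{Ric}>0$, I would invoke the Riccati identity for parallel hypersurfaces
\[
H'(t)=-\abs{A(t)}^2-\mathrm{Ric}(\nabla\rho,\nabla\rho),
\]
whose right-hand side is then strictly negative; this forces $H$ to be strictly decreasing on $(0,D)$ and to vanish at exactly one point.

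The main obstacle will be controlling the endpoint asymptotics $H(t)\to\pm\infty$, i.e.\ checking that the ambient curvature and the shape operator of $M^{\pm}$ do not spoil the leading $(k-1)/t$ behavior. This is handled via a Jacobi field calculation (Gray's tube formula) inside the open tube where the normal exponential map is a diffeomorphism; thanks to the isoparametric structure, this tube extends all the way up to $\rho=D$. The remaining ingredients — the codimension topology argument and the Riccati identity — are standard Riemannian geometry.
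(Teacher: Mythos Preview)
The paper does not prove Proposition~\ref{prop_2}: it is quoted as a known structural result, with pointers to \cite{wang_87} for the general theory and to \cite{ge_tang_13} and \cite{nomizu} for the minimality statements. Your outline is correct and supplies exactly the self-contained argument the paper chooses to omit. The topological part (i) is the standard one --- removing a codimension~$\geq 2$ submanifold keeps a connected manifold connected, and the unit normal sphere bundle over a connected base with connected fiber $S^{k-1}$ ($k\geq 2$) is connected. For (ii), the tube asymptotics $H(t)\sim (k-1)/t$ near each focal set, together with the traced Riccati identity $H' = -\abs{A}^2 - \mathrm{Ric}(\nabla\rho,\nabla\rho)$, is precisely the mechanism used in the cited references. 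One cosmetic point: in the paper's convention $H=\Delta\rho=-\theta'/\theta$ (with the positive Laplacian), so that $H(t)\to -\infty$ as $t\to 0^+$ and $H(t)\to +\infty$ as $t\to D^-$, opposite to your sign choice; the monotonicity argument is of course unaffected.
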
 

We refer to \cite{wang_87} for more information on isoparametric functions and hypersurfaces. We also refer to the survey paper \cite{thorbergsson_survey} for further historical information and for a quite complete collection of references.

\smallskip

{\bf Assumption.} From now on, we will assume that $M$ admits a proper isoparametric function $F:M\rightarrow [a,b]$.

\subsection{The distance function and isoparametric foliations}\label{sub:dist}

We can re-normalize $F$ and consider in its place the distance function to $M^+$, defined e.g., in \eqref{distance}. Namely, $\rho(x)={\rm dist}(x,M^+)$, where $M^+$ is one of the focal sets. The function $\rho$ is smooth on $M\setminus\{M^+\cup M^-\}$ and takes values in $[0,D(\mathcal F)]$, where $D(\mathcal F)={\rm dist}(M^+,M^-)$. 

\medskip
For $x\in M$, let $\Sigma_x$ denote the equidistant hypersurface to $M^+$ containing $x$, namely
$$
\Sigma_x=\{y\in M: \rho(y)=\rho(x)\}=\rho^{-1}(\rho(x)).
$$
Clearly $\Sigma_x$ is a smooth connected hypersurface of constant mean curvature if $x\in M\setminus\{M^+\cup M^-\}$. It is one of the focal varieties otherwise. In particular $\rho^{-1}(0)=M^+$, $\rho^{-1}(D(\mathcal F))=M^-$. 

\medskip

Let $x\in M\setminus\{M^+\cup M^-\}$. It is well-known that $\nabla\rho(x)$ defines a unit normal vector field to $\Sigma_x$. The function $\Delta\rho$, restricted to $\Sigma_x$, is constant and measures the mean curvature of $\Sigma_x$ (see also \eqref{theta}).

\medskip

Recall that the level sets of an isoparametric function $F$ generate an {\it isoparametric foliation} as described in \eqref{fol}. An isoparametric foliation $\mathcal F$ can be described equivalently using the level sets of $\rho$, namely,
\begin{equation}\label{fol2}
\mathcal F:\ M=\bigcup_{t\in[0,D(\mathcal F)]}\rho^{-1}(t).
\end{equation}
Accordingly, also $\rho^{-1}(t)$ will be called a {\it leaf} of the foliation $\mathcal F$.  From now on we will consider the pair $(M,\mathcal F)$ given by a compact Riemannian manifold $(M,g)$ (we shall omit the metric $g$) with an isoparametric foliation $\mathcal F$ on it.
 
 \medskip

\subsection{Isoparametric foliations and isometries}\label{sub:isom}
If $h$ is an isometry of $M$ and $F$ is an isoparametric function on $M$, it is readily seen that the function $h\cdot F\doteq F\circ h^{-1}$
is an isoparametric function as well, which generates a foliation denoted by $h\cdot \mathcal F$. We say that $h\cdot\mathcal F$ is {\it congruent} to $\mathcal F$. Clearly, the focal sets and the regular leaves of $\mathcal F$ are pairwise congruent to those of $h\cdot\mathcal F$, in the sense that $L$ is a leaf of $\mathcal F$ if and only if  $h(L)$ is a leaf of $h\cdot\mathcal F$.

\begin{rem}
Given a manifold $M$, it can admit many (possibly infinitely many) non-congruent isoparametric foliations. This is the case of $\sphere n$ as recalled in Section \ref{sec:sphere}.
\end{rem}

\subsection{The radial spectrum}\label{sub:radspec} Let $(M,\mathcal F)$ be an isoparametric foliation with focal varieties $M^+,M^-$. Given a function $f$ on $M$, we say that it is {\it radial} if it depends only on the distance $\rho$ to $M^+$, hence, if and only if $f=\psi\circ \rho$ for some function $\psi:[0,D(\mathcal F)]\to\reals$. 

\medskip

Let $f\in C^{\infty}(M)$. Averaging $f$ on the level sets of $\rho$ we obtain the {\it radialization of $f$}; it is the function denoted $\mathcal A f$ and defined as
$$
\mathcal Af(x):= \dfrac{1}{\abs{\Sigma_x}}\int_{\Sigma_x}f.
$$
Note that when $x$ belongs to the focal set, say $x\in M^+$, then $\Sigma_x=M^+$ and $\abs{\Sigma_x}$ and $\int_{\Sigma_x}f$ denote, respectively, the Riemannian measure of $M^+$ and the integral of $f$ on $M^+$, for the induced Riemannian measure.

\smallskip

As proved in \cite{savo_geometric_rigidity}, the radialization $\mathcal A f$ is a smooth function as well, and by definition it is radial. The definition of radialization provides an equivalent characterization of radial functions.
\begin{lemma}
A function $f$ is radial if and only if $f=\mathcal A f$.
\end{lemma}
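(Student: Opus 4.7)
The plan is a direct verification of the two implications, using only the definitions of \emph{radial function} and of the radialization $\mathcal Af$. The statement is essentially a tautology once one observes that $\mathcal Af$ is automatically constant on each leaf of $\mathcal F$, so the real content is that averaging over a leaf preserves the value of a function which is already constant on that leaf.

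First I would prove the easy direction: if $f$ is radial, then $f=\mathcal Af$. Assume $f=\psi\circ\rho$ for some $\psi:[0,D(\mathcal F)]\to\reals$. Fix $x\in M$ and consider the leaf $\Sigma_x=\rho^{-1}(\rho(x))$. For every $y\in\Sigma_x$ we have $\rho(y)=\rho(x)$, so $f(y)=\psi(\rho(y))=\psi(\rho(x))=f(x)$; hence $f$ is constant on $\Sigma_x$, equal to $f(x)$. Therefore
\[
\mathcal Af(x)=\dfrac{1}{\abs{\Sigma_x}}\int_{\Sigma_x} f\,dvol=\dfrac{1}{\abs{\Sigma_x}}\cdot f(x)\abs{\Sigma_x}=f(x).
\]
This argument works uniformly for $x$ on a regular leaf and on a focal leaf, because in both cases the leaf $\Sigma_x$ is precisely $\rho^{-1}(\rho(x))$ and $f$ is constant on it by the very assumption of radiality.

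Next I would prove the converse: if $f=\mathcal Af$, then $f$ is radial. The point is that $\mathcal Af$ depends on $x$ only through the leaf $\Sigma_x$, and $\Sigma_x$ is determined by $\rho(x)$. Explicitly, if $\rho(x)=\rho(x')$ then $\Sigma_x=\Sigma_{x'}$, hence $\mathcal Af(x)=\mathcal Af(x')$. Thus $\mathcal Af$ factors through $\rho$: there exists $\psi:[0,D(\mathcal F)]\to\reals$ with $\mathcal Af=\psi\circ\rho$. Since $f=\mathcal Af$ by hypothesis, we conclude $f=\psi\circ\rho$, i.e.\ $f$ is radial.

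The proof is entirely formal and there is no serious obstacle; the only (minor) point worth making explicit is that the definition of $\mathcal Af$ applies equally well at focal and regular points of $\rho$, so the two implications hold on all of $M$. One could also cite the smoothness of $\mathcal Af$ proved in \cite{savo_geometric_rigidity} to note that the function $\psi$ inherits the regularity of $f$ away from the focal sets, but this is not required for the statement of the lemma itself.
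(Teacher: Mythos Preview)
Your proof is correct and matches the paper's approach: the paper states the lemma without proof, noting only that ``the definition of radialization provides an equivalent characterization of radial functions,'' i.e.\ it treats the result as immediate from the definitions, which is exactly what you verify.
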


The crucial property of an isoparametric foliation is given by the following theorem proved in \cite{savo_geometric_rigidity}.

\begin{thm} Let $(M,\mathcal F)$ be an isoparametric foliation. Then the radialization operator commutes with the Laplacian. That is, for all $f\in C^{\infty}(M)$ one has
$$
\Delta(\mathcal Af)=\mathcal A(\Delta f).
$$
In particular, the Laplacian preserves the subspace of radial functions.
\end{thm}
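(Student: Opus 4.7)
The plan is to realize $\mathcal A$ as the orthogonal $L^2(M)$-projection onto the closed subspace $H\subseteq L^2(M)$ of radial functions, and then exploit formal self-adjointness of $\Delta$ to conclude that the two operators commute. This reduces the theorem to the assertion that $\Delta$ preserves $H$, which is a local calculation using only the isoparametric condition on $\rho$.

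First I would establish that $\mathcal A$ is idempotent and symmetric, hence an orthogonal projection onto $H$. Idempotence is immediate since $\mathcal A f$ is already constant on each leaf. Symmetry follows from the coarea formula: for $f,g\in C^\infty(M)$,
$$
\langle\mathcal A f,g\rangle_{L^2(M)}=\int_0^{D(\mathcal F)}\frac{1}{\abs{\Sigma_t}}\Bigl(\int_{\Sigma_t}f\Bigr)\Bigl(\int_{\Sigma_t}g\Bigr)\,dt,
$$
which is manifestly symmetric in $f,g$. Combined with the smoothness of $\mathcal A f$ already recorded above, this identifies $\mathcal A$ with the orthogonal projection onto $H$ at the smooth level.

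Next I would check that $\Delta$ sends smooth radial functions to radial functions. For $g=\psi\circ\rho$ smooth on $M$, the chain rule on the regular set $M\setminus(M^+\cup M^-)$ gives
$$
\Delta g=-\psi''(\rho)\abs{\nabla\rho}^2+\psi'(\rho)\Delta\rho.
$$
Since $\rho$ is a distance function, $\abs{\nabla\rho}^2\equiv 1$; and the isoparametric property forces $\Delta\rho$ to be constant on each regular leaf, so that $\Delta\rho=\theta\circ\rho$ for a smooth function $\theta$ on $(0,D(\mathcal F))$. Hence $\Delta g=(-\psi''+\theta\psi')\circ\rho$ is radial on the regular set, and by global smoothness of $\Delta g$ it is radial on all of $M$; this already establishes the ``in particular'' statement.

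Finally I would combine the two ingredients. Self-adjointness of $\Delta$ on the closed manifold $M$, together with $\Delta(H)\subseteq H$, forces $\Delta(H^\perp)\subseteq H^\perp$: for $f\in H^\perp$ and $h\in H$ one has $\langle\Delta f,h\rangle=\langle f,\Delta h\rangle=0$. Decomposing $f=\mathcal A f+(f-\mathcal A f)$, applying $\Delta$ componentwise, and projecting via $\mathcal A$ yields $\mathcal A(\Delta f)=\Delta(\mathcal A f)$. The main obstacle I anticipate is the singular behavior of $\rho$ at the focal sets: the chain-rule computation in Step 2 is only immediately valid on the regular set, so one must use the global smoothness of the radialization (cited above from \cite{savo_geometric_rigidity}) to extend the radial identity across $M^+\cup M^-$ and thereby justify the formula on all of $M$.
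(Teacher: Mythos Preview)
The paper does not actually prove this theorem: it is stated with the preface ``the following theorem proved in \cite{savo_geometric_rigidity}'' and no argument is given in the present paper. So there is no in-paper proof to compare against; the result is simply imported from the cited reference.

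That said, your argument is sound and self-contained. Realizing $\mathcal A$ as the $L^2$-orthogonal projection onto the subspace $H$ of radial functions via the coarea computation, checking $\Delta(H)\subseteq H$ by the chain rule and the isoparametric condition $\Delta\rho=\mathrm{const}$ on leaves, and then invoking formal self-adjointness of $\Delta$ on the closed manifold to get $\Delta(H^\perp)\subseteq H^\perp$ is a clean and standard route to the commutation. Two minor remarks: first, your use of the symbol $\theta$ for $\Delta\rho$ clashes with the paper's convention, where $\theta$ denotes the volume density and $\Delta\rho=-\theta'/\theta$; second, the delicate point you flag---extending the radial identity for $\Delta g$ across the focal sets---is indeed handled by the global smoothness of $\Delta g$ (since $g$ is smooth on the closed manifold) together with continuity, so no extra work is needed there beyond what you indicate.
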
 

Recall that the spectrum of the Laplacian on $M$ is given by
$$
0=\lambda_1<\lambda_2\leq \cdots\leq\lambda_k\leq\cdots\nearrow+\infty,
$$
where each eigenvalue is repeated according to its finite multiplicity. By ${\rm Spec}_1(M)$ we have denoted the set of all the eigenvalues of the Laplacian without multiplicity, i.e., the set of all distinct real values assumed by the eigenvalues of the Laplacian.

\smallskip

A radial eigenfunction associated with a radial eigenvalue as in Definition \ref{radialeig} satisfies an explicit ODE. In order to present such ODE, we first introduce the normal coordinates based on $M^+$, and 
we refer to \cite{savo_geometric_rigidity} for full details. As usual, $M^+$ is one of the two focal sets of the isoparametric foliation $\mathcal F$, and $\rho:M\to [0,D(\mathcal F)]$ is the distance function to $M^+$ which is smooth on 
$M\setminus \{M^+\cup M^-\}$.  Let $U(M^+)$ be the unit normal bundle of $M^+$; then $U(M^+)$ is locally isometric to $M^+\times \sphere{n-k-1}$, where $k=\dim M^+$, and we can write an element $\xi\in U(M^+)$ as a pair 
$$
\xi=(x,\nu(x)),
$$
where $x\in M^+$ and $\nu(x)$ is a unit vector in the tangent space of $M$ at $x$ normal to $M^+$ at $x$. We define the normal exponential map 
$$
\Phi: (0,D(\mathcal F))\times U(M^+)\to M\setminus \{M^+\cup M^-\}
$$
by $\Phi(t,\nu(x))=\exp_x(t\nu(x))$. The map $\Phi$ is a diffeomorphism; in fact, given any point $y\in M\setminus \{M^+\cup M^-\}$ let $\gamma_y$ be the geodesic which minimizes distance from $y$ to $M^+$: if $x\in M^+$ is the foot of such geodesic, and $t$ is the distance from $y$ to $M^+$, then one has $y=\Phi(x,\nu(x))$ where $\nu(x)=-\gamma_y(t)$. 

\smallskip

Let $dv_g$ be the Riemannian volume form of $M$; then, in normal coordinates, it writes
$$
\Phi^{\star}dv_g(t,\xi)=\theta(t,\xi) dtd\xi
$$
where $d\xi$ is the volume form of $U(M^+)$, for a smooth function $\theta$ defined on $ (0,D(\mathcal F))\times U(M^+)$, which is simply the density of the volume form in normal coordinates around $M^+$. Assume that the point 
$y\in M\setminus \{M^+\cup M^-\}$ has normal coordinates $(t,\xi)$. By \cite[Proposition 12]{savo_geometric_rigidity}  one has:
\begin{equation}\label{theta}
-\dfrac{\theta'(t,\xi)}{\theta(t,\xi)}=\Delta\rho(y)=H(y),
\end{equation}
where $\theta'$ denotes differentiation with respect to $t$ and $H(y)$ is the mean curvature of the equidistant $\Sigma_y$ containing $y$ (the mean curvature is intended to be the trace of the second fundamental form with respect to the unit normal vector $\nabla\rho$). As $\Sigma_y$ is an isoparametric hypersurface, it has by hypothesis constant mean curvature, so that the function on the right is constant on $\Sigma_y$, which is to say,  the expression on the left also does not depend on $\xi$. By integration, the density $\theta(t,\xi)$ is then independent on $\xi$ as well, and will be written $\theta(t)$, simply.

\smallskip

\begin{prop} An eigenfunction $f$ corresponding to a radial eigenvalue $\lambda$ is of the form $f=\psi\circ\rho$, where $\psi:(0,D(\mathcal F))\rightarrow\mathbb R$ is smooth and solves the ordinary differential equation
\begin{equation}\label{ODE_f}
\psi''+\frac{\theta'}{\theta}\psi'+\lambda\psi=0.
\end{equation}
on $(0,D(\mathcal F))$. Moreover,  $\psi'(0)=\psi'(D(\mathcal F))=0$.
\end{prop}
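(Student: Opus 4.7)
The statement splits into two parts: the ODE on $(0,D(\mathcal F))$, and the Neumann-type conditions $\psi'(0)=\psi'(D(\mathcal F))=0$. My plan is to derive the ODE by a direct Laplacian computation on the regular set $M\setminus(M^+\cup M^-)$, and then obtain the boundary conditions from smoothness of the eigenfunction $f$ across the focal varieties, using crucially that the foliation is proper so that $M^+$ and $M^-$ have codimension at least $2$.

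For the ODE, I would work on the open set $M\setminus(M^+\cup M^-)$, where $\rho$ is smooth and $|\nabla\rho|^2=1$. With the positive-Laplacian convention $\Delta=-\operatorname{div}\nabla$, the product and chain rules give
$$
\Delta(\psi\circ\rho)=-\operatorname{div}\bigl(\psi'(\rho)\nabla\rho\bigr)=-\psi''(\rho)|\nabla\rho|^2+\psi'(\rho)\Delta\rho=-\psi''(\rho)-\frac{\theta'(\rho)}{\theta(\rho)}\psi'(\rho),
$$
where in the last equality I invoke \eqref{theta} to replace $\Delta\rho$ with $-\theta'/\theta$; the comment just after \eqref{theta} guarantees that $\theta'/\theta$ is a legitimate function of $t=\rho$ alone, so the expression is well-defined. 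Equating this to $\lambda\psi(\rho)$ produces \eqref{ODE_f}. Smoothness of $\psi$ on $(0,D(\mathcal F))$ follows by evaluating $f$ along any unit-speed geodesic orthogonal to $M^+$: each regular leaf is met transversally exactly once, so $\psi(t)=f(\gamma(t))$ inherits smoothness from $f$.

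For the boundary conditions, I would exploit the properness hypothesis. Fix $p\in M^+$ and a unit vector $v\in T_pM$ normal to $M^+$, and consider the geodesic $\gamma(s)=\exp_p(sv)$ for $s\in(-\varepsilon,\varepsilon)$. Because $M^+$ has codimension $\ge 2$, the geodesic $\gamma$ exits $M^+$ for both small positive and small negative $s$, and by the very definition of the distance function one has $\rho(\gamma(s))=|s|$ for small $|s|$. Hence $f(\gamma(s))=\psi(|s|)$, and since $f$ is smooth on $M$ this one-variable composition is smooth at $s=0$; this forces $\psi$ to extend across $0$ as an even smooth function, in particular $\psi'(0^+)=0$. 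The identical argument applied at a point of $M^-$, with $\rho$ replaced by $D(\mathcal F)-\rho$, yields $\psi'(D(\mathcal F))=0$.

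The main obstacle is the boundary-condition step. One must observe that the codimension-$\ge 2$ assumption is precisely what ensures that $\gamma$ crosses a focal point into the bulk on both sides, so that the local form of $f$ along $\gamma$ is the even function $\psi(|s|)$ rather than $\psi(s)$: it is the matching of the two one-sided branches through the focal point that produces the Neumann condition. Without this hypothesis the endpoint derivative of $\psi$ could be nonzero. Everything else is a routine manipulation of the Riemannian Laplacian in normal coordinates around $M^+$.
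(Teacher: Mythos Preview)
Your proof is correct. The derivation of the ODE is essentially identical to the paper's: both compute $\Delta(\psi\circ\rho)=(\psi'\circ\rho)\Delta\rho-(\psi''\circ\rho)|\nabla\rho|^2$ on the regular set, use $|\nabla\rho|=1$ and $\Delta\rho=-\theta'/\theta$ from \eqref{theta}, and read off \eqref{ODE_f}.

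The treatments of the Neumann condition differ. The paper argues analytically in one line: since $\theta(0)=\theta(D(\mathcal F))=0$, the coefficient $\theta'/\theta$ in \eqref{ODE_f} is singular at the endpoints, and smoothness of $f$ then forces $\psi'$ to vanish there. You instead give a geometric even-extension argument along a normal geodesic through the focal variety, obtaining $f(\gamma(s))=\psi(|s|)$ and deducing $\psi'(0^+)=0$ from smoothness of $f$ at $p\in M^+$. Both are valid; yours is more explicit and self-contained, while the paper's is terser but presupposes familiarity with singular Sturm--Liouville behaviour. One minor over-claim in your closing remark: your even-extension argument only needs that $-v$ is again a unit normal and that $\rho(\exp_p(sv))=|s|$ in a tubular neighbourhood, both of which already hold in codimension~$1$. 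It is the \emph{paper's} route, via $\theta(0)=0$ (which requires $\mathrm{codim}\,M^{\pm}\ge 2$ so that $\theta(t)\sim t^{m}$ with $m\ge 1$), that genuinely relies on properness.
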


\begin{proof} We follow \cite[\S 2.1]{savo_geometric_rigidity}. We first see that $\Delta(\psi\circ\rho)=(\psi'\circ \rho)\Delta\rho-(\psi''\circ\rho)|\nabla\rho|^2$, and $|\nabla \rho|=1$ when $\rho\in(0,D(\mathcal F))$. Then, by \eqref{theta}, we have $\Delta\rho=-\frac{\theta'}{\theta}\circ\rho$. Moreover $\theta(t)>0$ for $t\in(0,D(\mathcal F))$. This establishes \eqref{ODE_f}. Finally, note that $\theta(0)=\theta(D(\mathcal F))=0$. Then, since $f$ is smooth, necessarily $\psi'(0)=\psi'(D(\mathcal F))=0$.
\end{proof}

\begin{example}
For $\sphere n$ and the standard foliation $\mathcal F$ given by concentric spheres, we have $\theta(\rho(x))=\sin^{n-1}(\rho(x))$, and $\Delta\rho(x)=-(n-1)\cot(\rho(x))$.
\end{example}

By $V(\lambda)$ we denote the eigenspace corresponding to an eigenvalue $\lambda$. As the radialization operator $\mathcal A$ commutes with the Laplacian, we see that $\mathcal A$ maps  $V(\lambda)$ to itself, namely
$$
\mathcal A(V(\lambda))\subseteq V(\lambda).
$$
Thus we have the following equivalent characterization: an eigenvalue $\lambda$ is radial if and only if
$$
\mathcal A(V(\lambda))\ne\{0\}.
$$
Clearly the constant function $1$ is radial, hence $\lambda_1=0$ is radial. Let us list the radial eigenvalues and denote them as follows:
\begin{equation}\label{rad_eig}
0=\lambda_1(M,\mathcal F)<\lambda_2(M,\mathcal F)<\cdots<\lambda_k(M,\mathcal F)<\cdots
\end{equation}

The set of all the radial eigenvalues is called the {\rm radial spectrum} of $(M,\mathcal F)$ and is denoted by ${\rm Spec}(M,\mathcal F)$.

\smallskip

We note that the radial eigenvalues listed in \eqref{rad_eig} are all distinct. This is a consequence of the following fact.

\begin{thm}\label{radial_spec} Let $(M,\mathcal F)$ be an isoparametric foliation. Then:
\begin{enumerate}[i)]
\item  ${\rm Spec}(M,\mathcal F)$ is an infinite subset of  ${\rm Spec}_1(M)$, which could possibly coincide with ${\rm Spec}_1(M)$.
\item Any eigenvalue of the radial spectrum has exactly one radial eigenfunction (up to scalar multiplication). 
\end{enumerate}
\end{thm}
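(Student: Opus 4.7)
The plan is to reduce both assertions to the one-dimensional ODE \eqref{ODE_f} characterising radial eigenfunctions. Multiplying \eqref{ODE_f} by $\theta$, this ODE takes the Sturm--Liouville form
$$
(\theta \psi')' + \lambda \theta \psi = 0 \quad \text{on } (0, D(\mathcal F)),
$$
coupled with the boundary conditions $\psi'(0) = \psi'(D(\mathcal F)) = 0$ forced by the smoothness of $\psi \circ \rho$ on all of $M$.

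For part (ii) I would argue via the Wronskian. Let $\psi_1, \psi_2$ be two radial eigenfunctions for the same $\lambda$, and set $W = \psi_1 \psi_2' - \psi_2 \psi_1'$. A direct computation using the ODE yields $W' = -(\theta'/\theta)\,W$, so $\theta(t)\,W(t)$ is constant on $(0, D(\mathcal F))$. Since $\theta(0) = 0$ while $W$ stays bounded near $t = 0$ (both $\psi_i$ being smooth with $\psi_i'(0) = 0$), this constant must be zero, whence $W \equiv 0$ and $\psi_1, \psi_2$ are proportional.

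For part (i), the inclusion ${\rm Spec}(M, \mathcal F) \subseteq {\rm Spec}_1(M)$ is immediate, since any radial eigenfunction is a genuine eigenfunction of $\Delta$ on $M$. Infiniteness will follow from spectral theory applied to $\Delta$ restricted to the subspace of radial functions. The radialization $\mathcal A$ is an orthogonal projection on $L^2(M)$: the identity $\mathcal A^2 = \mathcal A$ is evident from the averaging formula, and self-adjointness follows by Fubini from the coarea decomposition $\int_M f = \int_0^{D(\mathcal F)} \!\int_{\Sigma_t} f \, d\sigma_t \, dt$ (valid since $|\nabla\rho| = 1$). By the commutation identity $\Delta \circ \mathcal A = \mathcal A \circ \Delta$ quoted just above, the closed subspace $\mathcal R := \mathcal A(L^2(M))$ is $\Delta$-invariant, and $\Delta|_{\mathcal R}$ is self-adjoint with compact resolvent inherited from $M$. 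Since $\mathcal R$ contains $\psi \circ \rho$ for every smooth $\psi$ on $[0, D(\mathcal F)]$, it is infinite-dimensional; hence the spectrum of $\Delta|_{\mathcal R}$ is an infinite increasing sequence tending to $+\infty$, and its elements are exactly the radial eigenvalues. The possibility that ${\rm Spec}(M, \mathcal F) = {\rm Spec}_1(M)$ is realised e.g.\ by the standard foliation of $\sphere n$ by geodesic spheres around a pole, via zonal spherical harmonics.

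The main technical obstacle I foresee is making the radialization projection argument in (i) fully rigorous, in particular verifying the self-adjointness of $\mathcal A$ on all of $L^2(M)$ and the $\Delta$-invariance of the $L^2$-closure $\mathcal R$. Both reduce to clean integration manipulations using that the leaves of $\mathcal F$ form a measurable partition of $M$ along which $\mathcal A$ acts as a fibre average, so no genuinely new difficulty beyond careful bookkeeping is expected.
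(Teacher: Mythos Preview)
Your proposal is correct and follows essentially the same route as the paper: for (ii) you use the Wronskian identity $\theta W = \text{const}$ and let $t\to 0^+$ exactly as the paper does, and for (i) you argue via the $\Delta$-invariance of the radial subspace (the paper writes this as the decomposition $L^2(M)=L^2_{\mathrm{rad}}(M)\oplus L^2_{\mathrm{rad}}(M)^{\perp}$ preserved by $\Delta$) together with its infinite-dimensionality. The only difference is that you spell out the orthogonal-projection properties of $\mathcal A$ in more detail, which the paper leaves implicit.
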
 

\begin{proof} First observe that $L^2(M)=L^2_{rad}(M)\oplus L^2_{rad}(M)^{\perp}$, where $L^2_{rad}(M)$ is the infinite dimensional subspace of radial functions in $L^2(M)$ and the orthogonality is with respect to the scalar product of $L^2(M)$. Since the Laplacian of a radial function is radial (and the Laplacian preserves the decomposition) we deduce the existence of a Hilbert basis of $L^2_{rad}(M)$ of (radial) eigenfunctions. If the radial eigenvalues form a finite set, then $L^2_{rad}(M)$ would have finite dimension, which is not the case: this proves $i)$. To prove $ii)$, we see from \eqref{ODE_f} that two radial eigenfunctions $f_1=\psi_1\circ\rho$ and $f_2=\psi_2\circ\rho$ associated to the same eigenvalue $\lambda$ satisfy
$$
\theta(t)(\psi_1(t)'\psi_2(t)-\psi_2(t)'\psi_1(t))=C\,,\ \ \ t\in(0,D(\mathcal F))
$$
Since $\psi_1,\psi_2$ are smooth, taking the limit as $t\rightarrow 0^+$ or $t\rightarrow D(\mathcal F)^-$ we find that $C=0$, hence $\psi_1,\psi_2$ are linearly dependent.
\end{proof}

An alternative approach to the proof of Theorem \ref{radial_spec} is to study the ordinary differential equation \eqref{ODE_f}. The weight $\theta$ is smooth and positive on $(0,D(\mathcal F))$, $\theta(t)\sim t^{m^+}$ as $t\rightarrow 0^+$, $\theta(t)\sim (D(\mathcal F)-t)^{m^-}$ as $t\rightarrow D(\mathcal F)^-$, for some positive integers $m^+,m^-$ depending on $n$ and on the codimension of $M^+,M^-$, respectively. Thus \eqref{ODE_f} is a singular equation which admits a self-adjoint realization when we require boundedness of solutions at the endpoints. The corresponding spectrum is discrete, made of simple, non-negative eigenvalues diverging to $+\infty$.  Since a bounded solution $\psi$ of \eqref{ODE_f} necessarily satisfies  $\psi'(0)=\psi'(D(\mathcal F))=0$, then $f=\psi\circ\rho$ is a radial eigenfunction. We refer to \cite[\S 10]{zettl_sturm_liouville} and references therein for more details. 

\medskip

Note that ${\rm Spec}(M,\mathcal F)={\rm Spec}_1(M)$ if and only if all eigenspaces admit a radial eigenfunction. In the case of $\sphere n$ and the foliation $\mathcal F$ given by concentric spheres, then ${\rm Spec}(\sphere n,\mathcal F)={\rm Spec}_1(\sphere n)$. This is no longer true for other non-congruent foliations of $\sphere n$ as proved in Section \ref{sec:sphere}.

\begin{thm} The radial spectrum does not depend on the isometry. In other words, for any isometry $h$ of $M$ one has
$$
{\rm Spec}(M,\mathcal F)={\rm Spec}(M,h\cdot\mathcal F)
$$ 
\end{thm}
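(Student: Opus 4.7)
The plan is to show that pulling back a radial eigenfunction by $h^{-1}$ produces a radial eigenfunction for the congruent foliation with the same eigenvalue, and to conclude by symmetry.

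First, I would identify the geometric data of $h\cdot\mathcal F$ in terms of that of $\mathcal F$. Since $h\cdot F = F\circ h^{-1}$, the focal sets of $h\cdot\mathcal F$ are $h(M^+)$ and $h(M^-)$, and $D(h\cdot\mathcal F)=D(\mathcal F)$ because $h$ is an isometry. The distance function from $h(M^+)$ is exactly $\rho \circ h^{-1}$, so a function on $M$ is radial with respect to $h\cdot\mathcal F$ if and only if it can be written as $\psi\circ\rho\circ h^{-1}$ for some $\psi:[0,D(\mathcal F)]\to\mathbb R$.

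Next, given $\lambda\in\mathrm{Spec}(M,\mathcal F)$ with radial eigenfunction $f=\psi\circ\rho$, I would set $\tilde f := f\circ h^{-1}=\psi\circ\rho\circ h^{-1}$. By the previous paragraph, $\tilde f$ is radial for $h\cdot\mathcal F$. Since the Laplacian commutes with isometries, one has
$$
\Delta \tilde f = \Delta(f\circ h^{-1})=(\Delta f)\circ h^{-1}=\lambda (f\circ h^{-1})=\lambda \tilde f,
$$
so $\tilde f$ is a radial eigenfunction of $(M,h\cdot\mathcal F)$ associated to $\lambda$. This yields the inclusion $\mathrm{Spec}(M,\mathcal F)\subseteq \mathrm{Spec}(M,h\cdot\mathcal F)$.

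Finally, I would apply the same argument to the isometry $h^{-1}$ and the foliation $h\cdot\mathcal F$, whose image under $h^{-1}$ is $\mathcal F$. This gives the reverse inclusion $\mathrm{Spec}(M,h\cdot\mathcal F)\subseteq \mathrm{Spec}(M,\mathcal F)$, and hence equality. There is essentially no obstacle here; the only point worth being careful about is the identification of radial functions of $h\cdot\mathcal F$ as pullbacks by $h^{-1}$ of radial functions of $\mathcal F$, which follows immediately from the fact that $h$ is a distance-preserving bijection between the leaves of $\mathcal F$ and those of $h\cdot\mathcal F$.
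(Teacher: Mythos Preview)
Your argument is correct. You show directly that pulling back a radial eigenfunction by $h^{-1}$ yields a radial eigenfunction for the congruent foliation with the same eigenvalue, using only that isometries preserve distances and commute with the Laplacian; symmetry then gives both inclusions.

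The paper argues differently: it observes that the radial spectrum coincides with the spectrum of the singular Sturm--Liouville problem \eqref{ODE_f}, whose coefficients involve only the density $\theta$, and that $\theta$ is the same for $\mathcal F$ and $h\cdot\mathcal F$ (since it is determined by the mean curvatures of the equidistant leaves, which are preserved by $h$). Your route is more self-contained and avoids invoking the ODE characterization and the invariance of $\theta$; the paper's route is shorter once that machinery is in place and has the side benefit of identifying the two Sturm--Liouville problems themselves, not just their spectra. Both are perfectly valid.
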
 
This theorem is a consequence of the fact that the radial spectrum is exactly the spectrum of the singular problem \eqref{ODE_f}: this problem depends only on the density function $\theta$, which is the same for all congruent foliations. 

\section{The Pompeiu property}\label{sec:pompeiuprop}

In this section we will relate the radial spectrum with the Pompeiu property on certain domains associated with isoparametric foliations, and we will prove Theorems \ref{firstmain} and \ref{sf}.

\smallskip

Let $(M,\mathcal F)$ be an isoparametric foliation, and let $\rho:M\to[0,D(\mathcal F)]$ be the distance to the focal set $M^+$. For $t\in (0,D(\mathcal F))$ we call the level domain
$$
\Omega_t=\{x\in M:\rho(x)<t\}
$$
an {\it isoparametric tube} of $\mathcal F$. Being the isoparametric foliation proper, a corresponding isoparametric tube is connected.

\subsection{A sufficient condition for the failure of Pompeiu property}\label{sub:sufficient}

We prove the first main theorem of the paper, namely Theorem \ref{firstmain}, which establishes sufficient conditions under which Pompeiu property fails for all isoparametric tubes.

\begin{thm}\label{first_main} Let $(M,\mathcal F$) be an isoparametric foliation. Assume that ${\rm Spec}(M,\mathcal F)\subsetneq{\rm Spec}_1(M)$. Then every isoparametric tube of $\mathcal F$ fails the Pompeiu property. 
More precisely, let $f$ be an eigenfunction of $M$ associated to $\lambda\in {\rm Spec}_1(M)\setminus{\rm Spec}(M,\mathcal F)$. Then, for all $t\in (0,D(\mathcal F))$ and for any isometry $h$ of $M$ one has:
$$
\int_{h(\Omega_t)}f=0.
$$
\end{thm}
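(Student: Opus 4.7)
The plan is to reduce the Pompeiu integrals to integrals over leaves of the foliation via the coarea formula, and then exploit the radialization operator $\mathcal A$ to show that every such leaf integral vanishes when $f$ corresponds to a non-radial eigenvalue.

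\textbf{Step 1: translate ``$\lambda$ non-radial'' into the vanishing of $\mathcal A$ on $V(\lambda)$.}\ \
Because $\mathcal A$ commutes with $\Delta$, one has $\mathcal A(V(\lambda))\subseteq V(\lambda)$. If $\mathcal A f$ were not identically zero for some $f\in V(\lambda)$, then $\mathcal A f$ would be a \emph{radial} eigenfunction of $\Delta$ with eigenvalue $\lambda$, contradicting the assumption $\lambda\notin{\rm Spec}(M,\mathcal F)$. Hence, for every $f\in V(\lambda)$,
$$
\mathcal A f \equiv 0, \qquad\text{i.e.,}\qquad \int_{\Sigma_x} f \, d\sigma = 0 \quad \text{for every leaf } \Sigma_x=\rho^{-1}(\rho(x)).
$$

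\textbf{Step 2: integrate over an isoparametric tube via the coarea formula.}\ \
Since $|\nabla\rho|=1$ on $M\setminus\{M^+\cup M^-\}$, a set of full measure, the coarea formula gives
$$
\int_{\Omega_t} f \, dv_g \;=\; \int_0^{t}\!\!\int_{\rho^{-1}(s)} f \, d\sigma \, ds \;=\; \int_0^{t} 0 \, ds \;=\; 0,
$$
for every $t\in(0,D(\mathcal F))$. This already shows that the untranslated tube $\Omega_t$ ``annihilates'' every $f\in V(\lambda)$.

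\textbf{Step 3: deal with an arbitrary isometry $h$ of $M$.}\ \
For an arbitrary isometry $h\in G$, set $g\doteq f\circ h$. Then $\Delta g=\lambda g$ since $h$ is an isometry. Now, either proceed directly by change of variables,
$$
\int_{h(\Omega_t)} f \, dv_g \;=\; \int_{\Omega_t} g \, dv_g,
$$
and reapply Step 1 and Step 2 to $g$ (which lies in $V(\lambda)$); or equivalently, view $h(\Omega_t)$ as an isoparametric tube for the congruent foliation $h\cdot \mathcal F$, recall that the radial spectrum is invariant under congruence so that $\lambda\notin{\rm Spec}(M,h\cdot\mathcal F)$ either, and apply Steps 1--2 using the radialization operator $\mathcal A_{h\cdot\mathcal F}$. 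In both variants one concludes $\int_{h(\Omega_t)} f=0$, which is the desired assertion.

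\textbf{Main obstacle.}\ \
Once the formal machinery is set up, the argument is essentially a two-line computation; the only conceptual point requiring care is Step 1, namely the implication
$$
\lambda\notin{\rm Spec}(M,\mathcal F) \;\Longrightarrow\; \mathcal A\big|_{V(\lambda)}\equiv 0.
$$
This relies crucially on the theorem that $\mathcal A$ commutes with $\Delta$ (proved in \cite{savo_geometric_rigidity}), which is the deep input of the whole approach. Everything else is either the coarea formula or the trivial observation that the radial spectrum is invariant under isometries of $M$ (already recorded in Subsection~\ref{sub:radspec}).
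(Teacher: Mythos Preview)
Your proof is correct and follows essentially the same route as the paper's: both use that a non-radial eigenvalue forces the radialization of $f$ (with respect to $\mathcal F$ or any congruent $h\cdot\mathcal F$) to vanish identically, and then integrate over leaves via the coarea formula. The only cosmetic difference is that the paper phrases the argument by contradiction while you argue directly, but the ingredients and the logical flow are identical.
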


\begin{proof} We argue by contradiction, and assume that there exists $t\in (0,D(\mathcal F))$ and an isometry $h$ of $M$ such that
$$
\int_{h(\Omega_t)}f\ne 0
$$
Consider the foliation $h\cdot\mathcal F$ and let $\rho$ be the distance function to the focal set $h(M^+)$ of $\Omega:= h(\Omega_t)$. Then, by the coarea formula
$$
\int_0^t\left(\int_{\rho^{-1}(r)}f \right)dr=\int_{\Omega}f\ne 0,
$$
which implies that there exists $r_0\in (0,D(\mathcal F))$ such that $\int_{\rho^{-1}(r_0)}f\ne 0$. But then, the radialization of $f$ with respect to $h\cdot\mathcal F$ is non-zero, and is a radial eigenfunction associated to $\lambda$, which would imply that
$\lambda\in {\rm Spec}(M,h\cdot\mathcal F)= {\rm Spec}(M,\mathcal F)$. This contradicts the assumptions. Then the theorem holds. 
\end{proof}

%\begin{proof} Fix the isoparametric foliation $\mathcal F$ and et $\rho$ be as usual the distance function to the focal submanifold $M^+$
%\end{proof}

%%%%%%%%%%%%%%%%%%%%%%%%%%%%%%%%%%%%%%%%%%%%%%%%%%%%%%%%%%%%%%%%%%%%%%%%%%%%%%%%%%

\subsection{Density of isoparametric tubes failing the Pompeiu property}\label{sub:density}

It could happen that ${\rm Spec}_1(M)={\rm Spec}(M,\mathcal F)$. This is the case, for example, of the foliation by concentric spheres on $\sphere n$. However, there is always a dense subset of radii for which the isoparametric tube fails the Pompeiu property.  This is exactly our second main result, namely Theorem \ref{sf}, which we will prove in this subsection. We need a few preliminary results.

\smallskip

\begin{lemma} Let $f$ be a (not necessarily radial) eigenfunction of the Laplacian associated to the (positive) eigenvalue $\lambda$. Define the function $\Psi:[0,D(\mathcal F)]\to\reals$ by
$$
\Psi(t)=\int_{\Omega_t}f
$$
where $\Omega_t=\{x: \rho(x,M^+)<t\}$ is an isoparametric tube associated to the foliation $\mathcal F$. 
Then:
\begin{enumerate}[i)]
\item $\Psi$ is smooth, and satisfies the boundary value problem:
\begin{equation}\label{bvpsi}
\begin{cases}
{\Psi''-\dfrac{\theta'}{\theta}\Psi'+\lambda\Psi=0}\,, & {\rm in\ }(0,D(\mathcal F))\\
{\Psi(0)=\Psi(D(\mathcal F))=0}.
\end{cases}
\end{equation}

\item The spectrum of \eqref{bvpsi} coincides with the spectrum  of problem \eqref{ODE_f}, that is, the radial spectrum ${\rm Spec}(M,\mathcal F)$.
\end{enumerate}
\end{lemma}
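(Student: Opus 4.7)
For (i) I would work in the normal coordinates $\Phi$ around $M^+$ introduced in Subsection \ref{sub:radspec}. Since the volume form pulls back to $\theta(t)\,dt\,d\xi$ with $\theta$ depending only on $t$, the coarea formula gives
$$
\Psi(t)=\int_0^t\theta(s)G(s)\,ds,\qquad G(s):=\int_{U(M^+)}f(\Phi(s,\xi))\,d\xi,
$$
so $\Psi$ is smooth on $[0,D(\mathcal F)]$ with $\Psi'(t)=\theta(t)G(t)$. Both boundary conditions are immediate: $\Psi(0)=0$ by definition, and $\Psi(D(\mathcal F))=\int_M f=0$ because $\lambda>0$ forces $f$ to be orthogonal to the constants. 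For the ODE, I would apply the divergence theorem to $\Delta f=\lambda f$ on $\Omega_t$, whose outer unit normal on $\Sigma_t$ is $\nabla\rho$:
$$
\lambda\Psi(t)=\int_{\Omega_t}\Delta f=-\int_{\Sigma_t}\partial_\rho f\,d\sigma=-\theta(t)G'(t),
$$
the last equality coming from pulling $\partial_\rho f$ back via $\Phi$ and differentiating under the integral. Combining $\Psi'=\theta G$ with $\Psi''=\theta'G+\theta G'$ and $\theta G'=-\lambda\Psi$, and eliminating $G=\Psi'/\theta$, yields exactly $\Psi''-(\theta'/\theta)\Psi'+\lambda\Psi=0$.

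For (ii) I would exhibit an explicit $\lambda$-preserving bijection between nontrivial solutions of \eqref{bvpsi} and of \eqref{ODE_f}. Starting from a radial eigenfunction $\psi$ of \eqref{ODE_f} with $\lambda>0$, I rewrite \eqref{ODE_f} in Sturm--Liouville form $(\theta\psi')'+\lambda\theta\psi=0$, integrate from $0$ using $\psi'(0)=0$, and set $\Psi(t):=\int_0^t\theta(s)\psi(s)\,ds$; a direct computation then yields $\Psi''-(\theta'/\theta)\Psi'+\lambda\Psi=0$, while $\Psi(D(\mathcal F))=0$ follows from $\int_M \psi\circ\rho=0$ (again because $\lambda>0$). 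Conversely, given a nontrivial solution $\Psi$ of \eqref{bvpsi}, the candidate $\psi:=\Psi'/\theta$ is well-defined on $(0,D(\mathcal F))$ and the same calculation run backwards verifies \eqref{ODE_f}. Since both are singular Sturm--Liouville problems with discrete, simple spectra (cf.\ \cite[\S 10]{zettl_sturm_liouville}), this bijection identifies the two spectra.

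The main obstacle is the behavior at the singular endpoints $t=0$ and $t=D(\mathcal F)$, where $\theta$ vanishes like $t^{m^+}$ and $(D(\mathcal F)-t)^{m^-}$ respectively. In the reverse direction of (ii), verifying that $\psi=\Psi'/\theta$ extends smoothly to $[0,D(\mathcal F)]$ and satisfies $\psi'(0)=\psi'(D(\mathcal F))=0$ requires using \eqref{bvpsi} together with the conditions $\Psi(0)=\Psi(D(\mathcal F))=0$ to extract the correct Frobenius asymptotics (one finds $\Psi(t)\sim c\,t^{m^++1}$ near $0$, so that $\psi(t)\sim c(m^++1)$ and $\psi'(t)=-\lambda\Psi(t)/\theta(t)\sim -\lambda c\,t\to 0$). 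This is the step that selects the appropriate self-adjoint realization of \eqref{ODE_f} and ensures that $\psi\circ\rho$ is indeed a smooth radial eigenfunction on $M$.
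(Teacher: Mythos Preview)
Your proof is correct and follows essentially the same approach as the paper: for (i) you use the coarea formula and the divergence theorem (the paper computes $\Psi''$ via a Green identity from \cite{savo_geometric_rigidity}, which amounts to the same calculation), and for (ii) your integral transform $\psi\mapsto\Psi=\int_0^t\theta\psi$ with inverse $\Psi\mapsto\Psi'/\theta$ is exactly the map $L$ the paper uses to identify the two spectra. Your added discussion of the Frobenius asymptotics at the singular endpoints fills in details that the paper leaves implicit.
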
 

\begin{proof} $i)$ It is clear that $\Psi(0)=0$; as $\Omega_{D(\mathcal F)}=M\setminus M^-$ we see that $\Psi(D(\mathcal F))=\int_Mf=0$ because any non-constant eigenfunction has zero mean over $M$. Now 
$
\Psi'(t)=\int_{\rho=t}f
$
and an easy application of the Green formula gives (see \cite[\S 2.1]{savo_geometric_rigidity}):
$$
\Psi''(t)=\int_{\rho=t}(\scal{\nabla f}{\nabla\rho}-f\Delta\rho).
$$
As $\nabla\rho$ is the exterior unit normal to $\Omega_t$, and $\Delta\rho=-\frac{\theta'}{\theta}\circ\rho$, we get from the above:
$$
\begin{aligned}
\Psi''(t)&=-\lambda\int_{\Omega_t}f+\frac{\theta'}{\theta}\int_{\rho=t}f\\
&=-\lambda \Psi+\frac{\theta'}{\theta}\Psi',
\end{aligned}
$$
which gives the assertion. 

$ii)$ The two eigenvalue problems are unitarily equivalent via the map $\psi\mapsto L\psi$ where
$$
L\psi(t)=\int_0^t\theta(r)\psi(r)\,dr.
$$
One verifies that $\psi$ is a solution of \eqref{ODE_f}  if and only if $\Psi=L\psi$ is a solution of $\eqref{bvpsi}$.
\end{proof} 

Given $\lambda_k\in{\rm Spec}(M,\mathcal F)$, consider a radial eigenfunction $\phi_k$ associated to $\lambda_k$ and the content function 
\begin{equation}\label{psi}
\Psi_k(t)=\int_{\Omega_t}\phi_k, \quad\text{for all}\quad t\in [0,D(\mathcal F)],
\end{equation}
Then, $\Psi_k$ is a non trivial solution of problem \eqref{bvpsi}, hence an eigenfunction  associated to $\lambda_k$:
$$
\begin{cases}
{\Psi_k''-\dfrac{\theta'}{\theta}\Psi'_k+\lambda_k\Psi_k=0}\,, & {\rm in\ }(0,D(\mathcal F)),\\
{\Psi_k(0)=\Psi_k(D(\mathcal F))=0}.
\end{cases}
$$
We denote by $S_k$ the interior zero set of $\Psi_k$:
$$
S_k(\mathcal F)=\{t\in (0,D(\mathcal F)): \Psi_k(t)=0\}.
$$
Since every $\lambda_k$ is simple, $S_k(\mathcal F)$ does not depend on $\phi_k$.
By Courant-type arguments, the cardinality of $S_k(\mathcal F)$ cannot exceed $k$.  We then define:
\begin{equation}\label{ct}
S(\mathcal F)=\cup_{k=1}^{\infty}S_k(\mathcal F).
\end{equation}

It is clear that $S(\mathcal F)$ is countable and does not depend on the isometry, that is 
$S(\mathcal F)=S(h\cdot\mathcal F)$ for any isometry $h$.

We are now in position to prove our second main theorem:

\begin{theorem}\label{s_f}
Let $(M,\mathcal F$) be an isoparametric foliation and let $S(\mathcal F)$ be the subset of $(0,D(\mathcal F))$ defined by \eqref{ct}. Then
\begin{enumerate}[i)]
\item The isoparametric tube $\Omega_t$ fails the Pompeiu property for all $t\in 
S(\mathcal F)$.  
\item The set $S(\mathcal F)$ is countable and dense in $(0,D(\mathcal F))$. 
\end{enumerate}
\end{theorem}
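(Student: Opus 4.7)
The overall strategy is to exhibit, for each $t\in S(\mathcal F)$, a single explicit witness to the failure of the Pompeiu property on $\Omega_t$: this witness will be the radial eigenfunction $\phi_k$ itself, where $k$ is any index with $t\in S_k(\mathcal F)$.

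The core observation is the identity
\begin{equation*}
\int_{\Omega_t}f=\int_{\Omega_t}\mathcal Af\qquad\text{for every }f\in C^\infty(M),
\end{equation*}
which follows from the coarea formula applied to $\rho$ together with the fact that $\mathcal A f$ is constant along each leaf $\rho^{-1}(r)$ with value equal to the mean of $f$ on it. Now take $f\in V(\lambda_k)$. Since $\mathcal A$ commutes with $\Delta$, $\mathcal A f$ is a radial eigenfunction for $\lambda_k$, and by Theorem \ref{radial_spec}(ii) it must be a scalar multiple of $\phi_k$. Hence $\int_{\Omega_t}f=c(f)\,\Psi_k(t)$, and if $t\in S_k(\mathcal F)$ this vanishes for \emph{every} $f\in V(\lambda_k)$. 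Part (i) follows at once: for any isometry $h$ of $M$ the function $\phi_k\circ h$ lies in $V(\lambda_k)$ (isometries commute with $\Delta$), and the change-of-variable formula gives
\begin{equation*}
\int_{h(\Omega_t)}\phi_k=\int_{\Omega_t}\phi_k\circ h=c(\phi_k\circ h)\,\Psi_k(t)=0.
\end{equation*}
Since $\phi_k$ is smooth and not identically zero, it witnesses the failure of the Pompeiu property on $\Omega_t$.

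For part (ii), countability is immediate from the already-recorded Courant-type bound $|S_k(\mathcal F)|\le k$: a countable union of finite sets is countable. Density is a Sturm oscillation statement for problem \eqref{bvpsi}. Given an open subinterval $(a,b)\subseteq(0,D(\mathcal F))$, introduce the auxiliary Dirichlet eigenvalue problem on $(a,b)$ with the same differential operator as in \eqref{bvpsi}, and let $\mu_1(a,b)$ be its smallest eigenvalue. By the Sturm comparison theorem, every nontrivial solution of \eqref{bvpsi} with parameter $\lambda\ge\mu_1(a,b)$ must change sign in $(a,b)$; in particular this applies to $\Psi_k$ as soon as $\lambda_k\ge\mu_1(a,b)$, which occurs for all sufficiently large $k$ since the radial spectrum is unbounded. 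Hence $S_k(\mathcal F)\cap(a,b)\neq\emptyset$ for some $k$, and $S(\mathcal F)$ is dense.

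The conceptual heart lies in part (i): the fact that $\int_{h(\Omega_t)}\phi_k=0$ holds \emph{uniformly} in $h$, despite $\phi_k$ being radial only with respect to $\mathcal F$ and not with respect to $h\cdot\mathcal F$. What rescues us is the simplicity of $\lambda_k$ inside the radial spectrum, which forces $\mathcal A(\phi_k\circ h)$ to be collinear with $\phi_k$; without this simplicity one would lose control of the sign of the integral. The oscillation argument in (ii) is routine once the eigenvalues of \eqref{bvpsi} have been identified with ${\rm Spec}(M,\mathcal F)$.
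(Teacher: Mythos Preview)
Your proof is correct and follows essentially the same strategy as the paper. For part (i) you pull back by $h$ and use the identity $\int_{\Omega_t}f=\int_{\Omega_t}\mathcal A f$ together with the simplicity statement in Theorem~\ref{radial_spec}(ii), while the paper pushes forward to the congruent foliation $h\cdot\mathcal F$ and observes that $t\mapsto\int_{h(\Omega_t)}\phi_k$ solves the same one-dimensional problem \eqref{bvpsi}, then invokes simplicity there; these are the same argument viewed from the two ends of the change of variables, and part (ii) is handled identically in both via Sturm comparison / domain monotonicity against the Dirichlet problem on $(a,b)$.
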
 
\begin{proof} $i)$ If $t\in S_k(\mathcal F)$ then $\Psi_k(t)=0$ that is,
$
\int_{\Omega_t}\phi_k=0.
$
To show that $\Omega_t$ fails the Pompeiu property, it is enough to show that, for any isometry $h$, one has:
$$
\int_{h(\Omega_t)}\phi_k=0.
$$
Fix an isometry $h$ and consider the congruent foliation $h\cdot\mathcal F$, with focal set $\tilde M^+=h(M^+)$. Let $\tilde\rho$ be the distance function to $\tilde M^+$ and consider the isoparametric tube $\tilde\Omega_t=\{\tilde\rho<t\}$. It is clear that
$\tilde\Omega_t=h(\Omega_t)$ and, since the mean curvature of the leaves $\{\rho=t\}$ and $\{\tilde\rho=t\}$ are the same, we see that the density functions $\theta(t)$ and $\tilde\theta(t)$ are equal. Therefore the content function
$$
\tilde\Psi_k(t)=\int_{\tilde\Omega_t}\phi_k=\int_{h(\Omega_t)}\phi_k
$$
is also a solution to problem \eqref{bvpsi}. Since the eigenspace of $\lambda_k$ is one dimensional, we conclude that there is $c(h)\in\reals$ such that
$$
\int_{h(\Omega_t)}\phi_k=c(h)\int_{\Omega_t}\phi_k. 
$$
Note that $c(h)$ could be zero, but at any rate we see that if $\int_{\Omega_t}\phi_k=0$ for some $t$  then $\int_{h(\Omega_t)}\phi_k$ for all $h$. This proves that $\Omega_t$ fails the Pompeiu property for all $t\in S(\mathcal F)$.

\smallskip
 
$ii)$ We already remarked that $S(\mathcal F)$ is countable. We prove that $S(\mathcal F)$ is dense in $(0,D(\mathcal F))$. Assume that the complement of $S(\mathcal F)$ contains an open interval $(a,b)$.

\smallskip

By assumptions, any eigenfunction $\Psi$ of \eqref{bvpsi} with eigenvalue $\lambda$ does not vanish on $(a,b)$. We let $(\alpha,\beta)$ be the smallest interval containing $(a,b)$ and such that $\Psi(\alpha)=\Psi(\beta)=0$. As $\Psi$ does not change sign on $(\alpha,\beta)$ we see that it is a first eigenfunction of \eqref{bvpsi} with Dirichlet boundary conditions on such interval. We denote by $\lambda_1(\alpha,\beta)$ the corresponding eigenvalue. Then
$$
\lambda=\lambda_1(\alpha,\beta),
$$
A standard argument of domain monotonicity shows that, as $(a,b)\subseteq(\alpha,\beta)$:
$$
\lambda_1(\alpha,\beta)\leq \lambda_1(a,b),
$$
where $\lambda_1(a,b)$ is the first eigenvalue of \eqref{bvpsi} on $(a,b)$ with Dirichlet boundary conditions. We conclude that, for all radial eigenvalues $\lambda$, one has:
$$
\lambda\leq \lambda_1(a,b),
$$
This is impossible because the radial eigenvalues form an unbounded sequence. The assertion follows. 
\end{proof}

\begin{rem}\label{alt_first_main}
Looking at the proof of Theorem \ref{s_f} we easily deduce an alternative argument for proving Theorem \ref{first_main}. In fact, note that the eigenvalues of \eqref{bvpsi} coincide with the radial spectrum. Therefore, if $\lambda$ is not a radial eigenvalue then $\Psi=0$ on $(0,D(\mathcal F))$.
\end{rem}

Theorem \ref{s_f} (Theorem \ref{sf}) is valid regardless of the fact that ${\rm Spec}(M,\mathcal F)={\rm Spec}_1(M)$. This result is proved in \cite{shklover} for compact irreducible symmetric spaces of the first rank, extending the results of \cite{ungar} for spherical caps in $\sphere 2$. However, in view of Theorem \ref{first_main}, we see that $S=(0,D(\mathcal F))$ when ${\rm Spec}(M,\mathcal F)\ne{\rm Spec}_1(M)$.

%%%%%%%%%%%%%%%%%%%%

\section{Freak theorem on compact two-point homogeneous spaces}\label{sec:freak}

We will present here a simple proof of Theorem \ref{chs} based on the following {\it Addition Formula}, first proved in \cite{gine_75}, and for which we provide a short proof.  We give the proof for compact two-point homogeneous spaces: it is a classical fact that this family is equivalent to the family of compact rank one symmetric spaces, see \cite{helgason,wang}.

\begin{lemma} Let $M$ be a compact two-point homogeneous space and let $\lambda\in{\rm Spec}_1(M)$. Then:
\begin{enumerate}[i)]
\item For any $y\in M$ there is a unique normalized eigenfunction in $V(\lambda)$ which is radial  around $y$ and  positive at $y$. We denote it by  $\phi^{[y]}$.

\item For any orthonormal basis of $V(\lambda)$, say $(u_1,\dots,u_m)$, one has:
\begin{equation}\label{add_f}
\phi^{[y]}(x)=\sqrt{\dfrac{\abs M}{m}}\sum_{j=1}^mu_{j}(x)u_{j}(y)
\end{equation}
for all $x,y\in M$.
\end{enumerate}
\end{lemma}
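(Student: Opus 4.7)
The plan is to build $\phi^{[y]}$ out of the reproducing kernel of the eigenspace $V(\lambda)$, and then read off the addition formula by expanding that kernel in the orthonormal basis $(u_1,\dots,u_m)$.

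For item (i), I would invoke that the foliation by geodesic spheres centered at $y$ is a proper isoparametric foliation on any compact two-point homogeneous space (the two focal sets being $y$ and its antipodal cut locus, which is a point). Uniqueness of the radial (around $y$) eigenfunction in $V(\lambda)$, up to scalar, is already given by Theorem \ref{radial_spec}(ii). For existence, I would use two-point homogeneity in the form that the isotropy subgroup $K_y\subseteq G$ of $y$ acts transitively on every geodesic sphere centered at $y$. Averaging any $u\in V(\lambda)$ with $u(y)\ne 0$ over $K_y$ produces a $K_y$-invariant element of $V(\lambda)$, i.e., an eigenfunction radial around $y$ whose value at $y$ equals $u(y)\ne 0$. $L^2$-normalizing and fixing the sign so that the value at $y$ is positive singles out $\phi^{[y]}$.

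For item (ii) I would introduce the reproducing kernel
$$
K(x,y) = \sum_{j=1}^{m} u_j(x)u_j(y),
$$
which is the integral kernel of the orthogonal projection $L^2(M)\to V(\lambda)$, and therefore does not depend on the choice of orthonormal basis of $V(\lambda)$. If $h\in G$, then $(u_j\circ h)_{j=1}^m$ is another orthonormal basis, so $K(hx,hy)=K(x,y)$ for every isometry $h$. By two-point homogeneity this forces $K(x,y)$ to depend only on $d(x,y)$. In particular, the function $x\mapsto K(x,y)$ is radial around $y$ and lies in $V(\lambda)$, and $y\mapsto K(y,y)$ is constant on $M$.

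It remains to match normalizations. Integrating $K(y,y)=\sum_j u_j(y)^2$ over $M$ yields $m$, so the constant value of $K(y,y)$ equals $m/\abs M$; in particular $K(\cdot,y)$ is nonzero and positive at $y$. A direct computation using orthonormality gives
$$
\normsq{K(\cdot,y)}_{L^2(M)} = \sum_{j,k}u_j(y)u_k(y)\int_M u_ju_k = \sum_j u_j(y)^2 = K(y,y) = \frac{m}{\abs M}.
$$
Since $K(\cdot,y)$ is radial around $y$ and positive at $y$, the normalized function of item (i) must equal $\phi^{[y]}=\sqrt{\abs M/m}\,K(\cdot,y)$, which is precisely \eqref{add_f}. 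The main point to pin down is thus the combination \emph{basis-independence of the kernel} together with the \emph{reduction to a function of distance} via two-point homogeneity; once this is established, everything else is bookkeeping of normalization constants.
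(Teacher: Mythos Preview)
Your argument is correct and close in spirit to the paper's, but the two proofs diverge in how they establish that the kernel $K(x,y)=\sum_j u_j(x)u_j(y)$ is radial in $x$ around $y$. The paper extends $\phi^{[y]}$ to an orthonormal basis $(\phi^{[y]},w_2,\dots,w_m)$ of $V(\lambda)$ and argues that each $w_j$ must vanish at $y$ (otherwise its radialization around $y$ would produce a radial eigenfunction orthogonal to $\phi^{[y]}$, contradicting uniqueness), so that computing $K$ in this particular basis gives $K(x,y)=\phi^{[y]}(x)\phi^{[y]}(y)$ directly. You instead observe that $K(hx,hy)=K(x,y)$ for every isometry $h$ and invoke two-point homogeneity to conclude that $K$ depends only on $d(x,y)$. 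Your route is a bit more streamlined and makes the role of two-point homogeneity explicit; the paper's route yields the extra piece of information that the complementary basis vectors vanish at $y$. For part (i) the paper radializes at an arbitrary point $z$ where the eigenfunction is positive and then transports by an isometry sending $z$ to $y$, whereas you average over the isotropy subgroup $K_y$; both are standard and equivalent.

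One small factual slip: your parenthetical claim that the second focal set (the cut locus of $y$) is a point is false in general---in $\mathbb{CP}^n$, $\mathbb{HP}^n$, and the Cayley plane the cut locus of a point is a positive-dimensional submanifold. This does not affect your argument, since the foliation by geodesic spheres is nonetheless a proper isoparametric foliation and Theorem~\ref{radial_spec}(ii) applies.
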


\begin{proof} $i)$ Fix $y\in M$ and let $\phi$ be any normalized eigenfunction associated to $\lambda$. Pick a point $z\in M$ where $\phi(z)>0$; the radialization of $\phi$ around $z$ will give a normalized eigenfunction which is radial around $z$. Now fix an isometry $h$ sending $z$ to $y$: the function $\phi^{[y]}=\phi\circ h$ satisfies the assumptions.

$ii)$ Consider the function:
\begin{equation}\label{functiona}
A(x,y)=\sum_{j=1}^{m}u_{j}(x)u_{j}(y).
\end{equation}

We claim that $A(x,y)$ does not depend on the orthonormal basis chosen. 
\smallskip

In fact, if $\pi\delta_x$ denotes the orthogonal projection of the Dirac delta at $x$ onto the eigenspace $V(\lambda)$,  it is readily seen that 
$
\pi\delta_x=\sum_{j=1}^{m_k}u_j(x)u_j
$
hence
$$
A(x,y)=\scal{\pi\delta_x}{\pi\delta_y}.
$$
Consider the eigenfunction $\phi^{[y]}(x)$ as in $i)$, unique normalized eigenfunction 
which is radial around $y$ and positive at $y$. Extend it to an orthonormal basis of $V(\lambda)$  by adding $w_{2}, \dots, w_{m}$. We claim that every $w_{j}$ must vanish at $y$. In fact, if not, we can radialize $w_{j}$ around $y$ and get a radial eigenfunction orthogonal to $\phi^{[y]}$, which is impossible. Therefore, using the orthonormal basis $(\phi^{[y]}, w_2,\dots,w_m)$ to compute $A(x,y)$, we see that
\begin{equation}\label{anotherway}
A(x,y)=\phi^{[y]}(x)\phi^{[y]}(y).
\end{equation}
Now $A(y,y)=\phi^{[y]}(y)^2$ is independent on $y$ because isometries preserve eigenfunctions and their mean values over spheres. Integrating 
$A(y,y)=\sum_{j=1}^mu_j(y)^2$ over $M$ 
 we obtain $A(y,y)=\dfrac{m}{M}$ and then
\begin{equation}\label{atpoint}
\phi^{[y]}(y)=\sqrt{\dfrac{m}{M}}
\end{equation}
Putting together \eqref{functiona}, \eqref{anotherway} and \eqref{atpoint}  we obtain
$$
\phi^{[y]}(x)=\sqrt{\dfrac{\abs M}{m}}\sum_{j=1}^mu_{j}(x)u_{j}(y).
$$
\end{proof}

We remark that on $\mathbb S^1$ (i.e., in one dimension) formula \eqref{add_f} is just the addition formula for the cosine:
$$
\cos(n(x-y))=\cos(nx)\cos(ny)+\sin(nx)\sin(ny),
$$
with $n\in\mathbb N$. In this case, in \eqref{add_f} we have $\lambda=n^2$, $\phi^{[y]}(x)=\frac{\cos(n(x-y))}{\sqrt{\pi}}$, $M=2\pi$, $m=2$ (if $n\geq 1$), $u_1(x)=\frac{\cos(nx)}{\sqrt{\pi}}$, $u_2(x)=\frac{\sin(nx)}{\sqrt{\pi}}$.

\medskip
Here is what we want to prove. 
\begin{theorem} Let $M$ be a compact two-point homogeneous space with diameter $D$.
There is a countable dense set $S\subset (0,D)$ such that the geodesic ball of radius $t$ fails the Pompeiu property  if $t\in S$ and has the Pompeiu property if  $t\in (0,D)\setminus S$.
\end{theorem}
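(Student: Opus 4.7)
The ``only if'' direction of the statement is a direct application of Theorem \ref{s_f}: take $\mathcal F$ to be the standard isoparametric foliation of $M$ whose focal set is a fixed point $x_0$ (which is well-defined since a compact two-point homogeneous space is harmonic, so all geodesic spheres have constant mean curvature), whose isoparametric tubes are precisely the geodesic balls $B(x_0,t)$, and set $S=S(\mathcal F)$. The content of the theorem is the converse: if $t\in(0,D)\setminus S(\mathcal F)$, then $B(x_0,t)$ has the Pompeiu property.

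For the converse, let $f\in C^0(M)$ satisfy $\int_{h(B(x_0,t))}f=0$ for every isometry $h$. By transitivity of the isometry group the hypothesis becomes $\int_{B(y,t)}f=0$ for every $y\in M$. I would expand $f=\sum_k f_k$ with $f_k\in V(\lambda_k)$ in $L^2(M)$ and consider the bounded operator $T_t:L^2(M)\to L^2(M)$ defined by $T_tf(y)=\int_{B(y,t)}f$. Because $M$ is two-point homogeneous, the Schwartz kernel of $T_t$ depends only on $\dist(x,y)$, so $T_t$ commutes with $\Delta$ and therefore preserves each eigenspace $V(\lambda_k)$; hence the vanishing $T_tf=0$ forces $T_tf_k=0$ for every $k$, reducing the problem to showing that $T_t$ acts on $V(\lambda_k)$ as multiplication by a nonzero scalar whenever $t\notin S(\mathcal F)$.

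The key step uses the Addition Formula. Let $\mathcal A_y$ denote the radialization operator around $y$ (associated with the foliation whose focal set is $\{y\}$). Since $B(y,t)$ is a level domain of the distance to $y$, one has $\int_{B(y,t)}f_k=\int_{B(y,t)}\mathcal A_yf_k$. Now $\mathcal A_yf_k\in V(\lambda_k)$ is radial around $y$, and by Theorem \ref{radial_spec}(ii) such eigenfunctions form a one-dimensional space spanned by $\phi^{[y]}$, so $\mathcal A_yf_k=\alpha(y)\phi^{[y]}$. Evaluating at $y$ and using $\phi^{[y]}(y)=\sqrt{m/\abs M}$ gives $\alpha(y)=\sqrt{\abs M/m}\,f_k(y)$, and therefore
$$
T_tf_k(y)=\sqrt{\abs M/m}\,f_k(y)\,\Psi_k(t),\qquad \Psi_k(t):=\int_{B(y,t)}\phi^{[y]},
$$
where $\Psi_k(t)$ is independent of $y$ by homogeneity and coincides, up to a universal scalar, with the content function \eqref{psi} of the radial eigenfunction attached to $\lambda_k$. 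If $t\notin S(\mathcal F)$ then $\Psi_k(t)\neq 0$ for every $k$, so $T_tf_k=0$ forces $f_k\equiv 0$ for all $k$, whence $f\equiv 0$.

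The main point I expect to require careful verification is the eigenspace-preservation property of $T_t$ (i.e.\ that $T_t$ commutes with $\Delta$), which licenses the termwise reduction from $T_tf=0$ to $T_tf_k=0$; once this is in place the argument is a one-line application of the Addition Formula together with the definition of $S(\mathcal F)$.
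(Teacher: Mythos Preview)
Your proposal is correct and follows essentially the same strategy as the paper: both arguments use the Addition Formula and the one-dimensionality of radial eigenfunctions to show that, for $t\notin S(\mathcal F)$, the integral $\int_{B(y,t)}f_k$ is a nonzero scalar multiple of $f_k(y)$, forcing each eigencomponent of $f$ to vanish. The only difference is packaging: the paper expands $\chi_{h(\Omega_t)}$ explicitly in a fixed spectral basis via the full Addition Formula and pairs with $f$ coefficientwise, whereas you work with the averaging operator $T_t$, observe it preserves eigenspaces, and compute its action on $V(\lambda_k)$ through the radialization identity $\mathcal A_yf_k=\sqrt{\abs M/m}\,f_k(y)\,\phi^{[y]}$ (using only the pointwise value $\phi^{[y]}(y)=\sqrt{m/\abs M}$ from the lemma rather than the full expansion). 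Your operator-theoretic phrasing is slightly more streamlined, and the step you flag as needing care---that $T_t$ preserves eigenspaces---is in fact already implied by your subsequent computation $T_tf_k=c_k f_k$, so no separate verification is required beyond the boundedness of $T_t$ (which is immediate since its kernel is a bounded indicator).
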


\begin{proof}
As a matter of fact, we prove the theorem for $S=S(\mathcal F)$ as in the previous section, when $\mathcal F$ is the foliation by geodesic spheres centered at a fixed point $y_0$. Thus, $\Omega_t$ is simply the ball of center $y_0$ and radius $t$, namely $\Omega_t=B(y_0,t)$.

\smallskip

If $t\in S(\mathcal F)$ then $\Omega_t$ fails the Pompeiu property by Theorem \ref{s_f}. Therefore, we only need to show that, if  $t\notin S(\mathcal F)$, then $\Omega_t$ has the Pompeiu property. 

In other words, if $f$ is continuous and satisfies $\int_{h(\Omega_t)}f=0$ for all isometries $h$, then necessarily $f=0$.

\smallskip

We fix once and for all a spectral resolution $\{u_{kj}\}$ of $M$, where $k=1,2,\dots$ and $j=1,\dots, m_k$, the multiplicity of $\lambda_k$. Expand $f$ and get:
$$
f(x)=\sum_{k=1}^{\infty}\sum_{j=1}^{m_k}a_{kj}u_{kj}(x)
$$
We want to show that, under the given assumptions, $a_{kj}=0$ for all $k,j$.
Fix the isometry $h$, and consider $h(\Omega_t)$: this is the ball of center $h(y_0)$ and same radius $t$:
$$
h(\Omega_t)=B(h(y_0),t).
$$
Let $
\chi_{h(\Omega_t)}$ denote the characteristic function of $h(\Omega_t)$. We can express it as follows:
\begin{equation}\label{chi}
\chi_{h(\Omega_t)}=\sum_{k=1}^{\infty}c_k \phi^{[h(y_0)]}_k,
\end{equation}
where $\phi^{[h(y_0)]}_k$ is the normalized  eigenfunction associated with $\lambda_k(M,\mathcal F)$, which is radial around $h(y_0)$ and positive at $h(y_0)$. In fact, just extend  $\phi^{[h(y_0)]}_k$ to an orthonormal basis of the eigenspace $V(\lambda_k)$ by adding the eigenfunctions $w_2,\dots, w_{m_k}$; any of these is orthogonal to the subspace of radial functions around $h(y_0)$, in particular, it is orthogonal to $\chi_{h(\Omega_t)}$.
Hence, only the radial eigenfunctions $\phi^{[h(y_0)]}_k$ appear in the Fourier expansion of $\chi_{h(\Omega_t)}$. 

\smallskip

Next, observe that
$$
c_k=\int_{h(\Omega_t)}\phi_k^{[h(y_0)]}=\int_{\Omega_t}\phi_k^{[y_0]}\ne 0
$$ 
for all $k$, because otherwise $t\in S(\mathcal F)$.  Note that $c_k$ does not depend on $h$. We want to express $\chi_{h(\Omega_t)}$ in terms of the fixed basis $\{u_{kj}\}$; for that, we apply the Addition Formula \eqref{add_f} to $y=h(y_0)$ and $\lambda=\lambda_k$ and obtain, for each $k$:
$$
\phi_k^{[h(y_0)]}(x)=\sqrt{\dfrac{\abs M}{m_k}}\sum_{j=1}^{m_k}u_{kj}(x)u_{kj}(h(y_0))
$$
so that \eqref{chi} becomes:
$$
\chi_{h(\Omega_t)}(x)=\sum_{k=1}^{\infty}\sum_{j=1}^{m_k}b_ku_{kj}(h(y_0))u_{kj}(x)
$$
with $b_k=\sqrt{\frac{|M|}{m_k}}c_k\ne 0$ for all $k$. We have then
$$
\begin{aligned}
\int_{h(\Omega_t)}f&=\int_M\chi_{h(\Omega_t)}f\\
&=\int_M\left(\sum_{k=1}^{\infty}\sum_{j=1}^{m_k}b_ku_{kj}(h(y_0))u_{kj}(x)\right)\left(\sum_{i=1}^{\infty}\sum_{l=1}^{m_i}a_{il}u_{il}(x)\right)\, dx\\
&=\sum_{k=1}^{\infty}\sum_{j=1}^{m_k}b_k a_{kj}u_{kj}(h(y_0)).
\end{aligned}
$$
By assumption, $\int_{h(\Omega_t)}f=0$ for any isometry $h$, hence  the right-hand side vanishes for all isometries $h$; since $G$ acts transitively on $M$, we conclude that
$$
\sum_{k=1}^{\infty}\sum_{j=1}^{m_k}b_k a_{kj}u_{kj}(y)=0
$$
for all $y\in M$.
 Multiplying by $u_{\ell m}(y)$ and integrating over $M$ we obtain that $b_{\ell}a_{\ell m}=0$, hence $a_{\ell m}=0$,  for all $\ell,m$. The proof is complete.

\end{proof}

%%%%%%%%%%%%%%%%%%%%

\section{Isoparametric foliations and the Pompeiu property on the sphere}\label{sec:sphere}

When $M=\sphere n$, $n\geq 2$, is the standard $n$-sphere, we have a more explicit characterization of proper isoparametric foliations. The geometric classification of connected isoparametric hypersurfaces of $\sphere n$ is a long standing problem started with Cartan \cite{cartan1} which has been completed only very recently \cite{chi}. We resume here a few fundamental results proved in \cite{munzner1,munzner2}.

\medskip

Let $\Sigma$ be a  connected isoparametric hypersurface of $\sphere n$. Then
\begin{enumerate}[i)]
\item the number $g$ of distinct principal curvatures of $\Sigma$ can only be $1,2,3,4,6$;
\item if $\kappa_1<\cdots<\kappa_i<\cdots\kappa_g$ denote the distinct principal curvatures, their multiplicities assume only two values $m_0,m_1$ and are repeated as  $m_0,m_1,m_0,...$; moreover $n-1=\frac{g(m_0+m_1)}{2}$;
\item if $\Sigma$ has $g$ distinct principal curvatures then it is a level surface of a Cartan polynomial $p(x)$, $x\in\real {n+1}$, namely a polynomial satisfying
\begin{equation}\label{cartanp}
\begin{cases}
\overline\Delta p(x)=-c|x|^{g-2}\,,\\
|\overline\nabla p|^2=g^2|x|^{2g-2},\\
\end{cases}
\end{equation}
where $\overline\Delta$ and $\overline\nabla$ are the Laplacian and gradient of $\mathbb R^{n+1}$, and $c=(m_1-m_0)g^2/2$. Denoting $F=p_{|_{\mathbb S^n}}$, then $F(\sphere n)=[-1,1]$, $F$ is a proper isoparametric function and $\Sigma=F^{-1}(t)$ for some $t\in(-1,1)$.
\end{enumerate}
Conversely, let $p(x)$ be a Cartan polynomial satisfying \eqref{cartanp} with $g(n-1)\pm c\ne 0$, and let $F=p_{|_{\mathbb S^n}}$. Then $F^{-1}(t)$ for $t\in(-1,1)$ is a regular connected isoparametric hypersurface with $g$ distinct principal curvatures. Moreover $F$ satisfies
\begin{equation}\label{eq1}
\begin{cases}
\Delta F=g(g+n-1)F-c,\\
|\nabla F|^2=g^2(1-F^2),
\end{cases}
\end{equation}
where $\Delta$ and $\nabla$ are the usual Laplacian and gradient on $\mathbb S^n$. 

\medskip

Throughout this section, by $F$ we denote the restriction of a Cartan polynomial on $\sphere n$. As usual, let $M^+$ and $M^-$ denote the focal sets of the isoparametric foliation $\mathcal F$ associated with $F$. We can write then
\begin{equation}\label{V}
F(x)=\cos(g\rho(x)),
\end{equation}
where $\rho(x)={\rm dist}(x,M^+)$. From \eqref{V} we deduce that
$$
\Delta \rho(x)=-\frac{F(x)|\nabla F(x)|^2}{g(1-F(x)^2)^{3/2}}-\frac{\Delta F(x)}{g (1-F(x)^2)^{1/2}},
$$
which combined with \eqref{eq1} gives
\begin{equation}\label{Laplacian_sphere}
\Delta \rho(x)=-(n-1)\cot(g\rho(x))+\frac{c}{g\sin(g\rho(x))}.
\end{equation}
If $f=\psi\circ\rho$ for some smooth $\psi$, then from \eqref{ODE_f} and \eqref{Laplacian_sphere} we obtain that $\Delta f=\lambda f$ on $\mathbb S^n$ is equivalent to 
\begin{equation}\label{eq0}
\psi''(t)+\left((n-1)\cot(gt)-\frac{c}{g\sin(gt)}\right)\psi'(t)+\lambda \psi(t)=0\,,\ \ \ t\in\left(0,\frac{\pi}{g}\right),
\end{equation}
and $\psi'(0)=\psi'(\pi/g)=0$. 

\medskip

It is well-known that ${\rm Spec}_1(\sphere n)=\{k(k+n-1):k\in\mathbb N\}$. In order to apply Theorem \ref{first_main} we need to compute ${\rm Spec}(\sphere n,\mathcal F)$.

\begin{thm}\label{sphere_prop}
Let $(\sphere n,\mathcal F)$ be an isoparametric foliation with $g$ distinct principal curvatures. Then 
$$
{\rm Spec}(\sphere n,\mathcal F)=\{gk(gk+n-1):k\in\mathbb N\}.
$$
\end{thm}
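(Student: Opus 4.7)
The plan is to transform the one-dimensional ODE \eqref{eq0} for radial eigenfunctions into a classical Jacobi equation via the substitution $s=\cos(g\rho)=F$, and then read off the radial spectrum from the Jacobi theory.

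First I would set $s=\cos(gt)$ and $\tilde\psi(s)=\psi(t)$ in \eqref{eq0}. A direct chain-rule computation gives $\psi'(t)=-g\sqrt{1-s^2}\,\tilde\psi'(s)$ and $\psi''(t)=g^2(1-s^2)\tilde\psi''(s)-g^2 s\,\tilde\psi'(s)$; using $\cot(gt)=s/\sqrt{1-s^2}$ and $\sin(gt)=\sqrt{1-s^2}$, the equation collapses, after dividing by $g^2$, to
\[
(1-s^2)\tilde\psi''(s)+\left(\frac{c}{g^2}-\frac{g+n-1}{g}s\right)\tilde\psi'(s)+\frac{\lambda}{g^2}\tilde\psi(s)=0,\qquad s\in(-1,1).
\]
This is the Jacobi equation $(1-s^2)y''+(\beta-\alpha-(\alpha+\beta+2)s)y'+k(k+\alpha+\beta+1)y=0$ with $\beta-\alpha=c/g^2$ and $\alpha+\beta+2=(g+n-1)/g$. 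Using the identities $n-1=g(m_0+m_1)/2$ and $c=(m_1-m_0)g^2/2$ recalled at the beginning of this section, this forces $\alpha=(m_0-1)/2$ and $\beta=(m_1-1)/2$, both of which are strictly greater than $-1$ since $m_0,m_1\geq 1$.

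The classical Jacobi theory then yields the eigenvalues $\lambda/g^2=k(k+\alpha+\beta+1)=k(k+(n-1)/g)$ for $k\in\mathbb N$, i.e., precisely $\lambda_k=gk(gk+n-1)$, with the Jacobi polynomials $P_k^{(\alpha,\beta)}(s)$ as eigenfunctions. To see that these exhaust ${\rm Spec}(\sphere n,\mathcal F)$ I would invoke completeness: since $\alpha,\beta>-1$, the family $\{P_k^{(\alpha,\beta)}\}_{k\in\mathbb N}$ is a complete orthogonal system in the weighted space $L^2((-1,1);(1-s)^\alpha(1+s)^\beta\,ds)$, which is isometric, via $s=\cos(g\rho)$, to $L^2_{\rm rad}(\sphere n)$; hence the functions $\phi_k:=P_k^{(\alpha,\beta)}\circ F$ already span all radial eigenfunctions on $\sphere n$. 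The endpoint conditions $\phi_k'(0)=\phi_k'(\pi/g)=0$ hold automatically because $\sin(g\rho)=0$ at $\rho=0,\pi/g$, so each $\phi_k$ is a bona fide smooth radial eigenfunction on $\sphere n$ with the claimed eigenvalue.

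The only delicate point is pure bookkeeping: keeping track of signs and of the constants $c,g,n,m_0,m_1$ when passing between the conventions of \eqref{cartanp}--\eqref{eq1} and the standard Jacobi normalization. No conceptual obstacle is expected; the statement is essentially a rewriting of \eqref{eq0} as a Jacobi eigenvalue problem, combined with the completeness of Jacobi polynomials for $\alpha,\beta>-1$.
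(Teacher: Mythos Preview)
Your proposal is correct and begins exactly as the paper does: the same substitution $s=\cos(gt)$ leads to the same singular equation on $(-1,1)$ (the paper's \eqref{SSL2}). The divergence is only in how the resulting one-variable problem is solved. You identify \eqref{SSL2} as the Jacobi equation with parameters $\alpha=(m_0-1)/2$, $\beta=(m_1-1)/2>-1$ and then invoke the classical package: the Jacobi polynomials $P_k^{(\alpha,\beta)}$ are bounded eigenfunctions with eigenvalues $k(k+\alpha+\beta+1)=\lambda/g^2$, and their completeness in $L^2((-1,1);(1-s)^\alpha(1+s)^\beta\,ds)$ rules out any further radial eigenvalue. The paper instead reproves these facts from scratch: a Wronskian argument for simplicity, a power-series recurrence to show that bounded solutions force $\lambda=gk(gk+n-1)$ and are polynomial, and Stone--Weierstrass for the completeness step. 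Your route is shorter and more conceptual once one is willing to quote Jacobi theory; the paper's is fully self-contained and avoids any appeal to special-function literature. The Remark following the paper's proof (the Rodrigues-type formula \eqref{polynomial}) shows the authors are aware of the Jacobi connection but chose not to rely on it.
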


\begin{proof}
By setting $\psi(t)=y(\cos(gt))$, problem \eqref{eq0} is recast to the following singular Sturm-Liouville problem
\begin{equation}\label{SSL2}
(1-x^2)y''(x)+\left(\frac{c}{g^2}-\left(\frac{n-1}{g}+1\right)x\right)y'(x)+\frac{\lambda}{g^2}y(x)=0,
\end{equation}
for $x\in(-1,1)$. We require that $y$, along with its derivatives, remains bounded at $x=\pm 1$.

\medskip

{\bf Part 1.} First, we show that the eigenvalues are simple. In fact, if $y_1,y_2$ are two bounded solutions of \eqref{SSL2} with same eigenvalue $\lambda$ then
$$
(1-x)^{\frac{n-1}{2g}-\frac{c}{2g^2}+\frac{1}{2}}(1+x)^{\frac{n-1}{2g}+\frac{c}{2g^2}+\frac{1}{2}}(y_1'(x)y_2(x)-y_2'(x)y_1(x))=C
$$
on $(-1,1)$, for some $C\in\mathbb R$. By letting $x\rightarrow\pm 1$ we deduce that $C=0$, since $\frac{n-1}{g}\pm\frac{c}{g^2}\geq 1$. Then $y_1$ and $y_2$ are linearly dependent.

\medskip

{\bf Part 2.}  We prove now that \eqref{SSL2} admits a polynomial solution if and only if $\lambda=gk(gk+n-1)$ for $k\in\mathbb N$. To simplify the presentation, we will consider the case $c=0$. The case $c\ne 0$ is treated similarly.  Let $y$ be any function satisfying \eqref{SSL2} for some $\lambda\in\mathbb R$. It is analytic in $(-1,1)$ hence we can write $y(x)=\sum_{j=0}^{\infty}a_j x^j$. Substituting this expression in \eqref{SSL2} we obtain
\begin{equation}\label{exp}
(1-x^2)\sum_{j=2}^{\infty}j(j-1)a_jx^{j-2}-\sum_{j=1}^{\infty}\left(\left(\frac{n-1}{g}+1\right)x\right)ja_jx^{j-1}+\sum_{j=0}^{\infty}\frac{\lambda}{g^2} a_jx^j=0.
\end{equation}
Comparing the coefficients we find the recurrence relations 
\begin {equation}\label{recurr}
a_2=-\frac{\lambda}{2g^2}a_0\,,\ \ \  a_3=\frac{1}{6}\left(\left(\frac{n-1}{g}+1\right)-\frac{\lambda}{g^2}\right)a_1\,,\ \ \ 
a_{j+2}=\frac{j\left(j+\frac{n-1}{g}\right)-\frac{\lambda}{g^2}}{(j+2)(j+1)}a_j.
\end{equation}
If $y$ is a polynomial of degree $k$, then from \eqref{recurr} with $j=k$ we see that $\lambda=gk(gk+n-1)$. Conversely, if $\lambda=gk(gk+n-1)$ for some $k\in\mathbb N$, then $a_{k+2\ell}=0$ for all $\ell\geq 1$. Without loss of generality, assume $k$ even. Then $y(x)=P_k(x)+\sum_{j=0}^{\infty}a_{2j+1}x^{2j+1}$, where $P_k$ is an even polynomial of degree $k$. If $a_{2j+1}=0$ for some $j$, then from \eqref{recurr} we see that $a_{2j+1}=0$ for all $j\in\mathbb N$. Assume that $a_{2j+1}\ne 0$ for some $j\in\mathbb N$ (hence for all $j\in\mathbb N$). Relation \eqref{recurr} is rewritten as
\begin{equation}\label{recurr2} 
a_{2j+3}=\frac{(2j+1)\left(2j+1+\frac{n-1}{g}\right)-\frac{\lambda}{g^2}}{(2j+3)(2j+2)}a_{2j+1}.
\end{equation}
We note that there exists $j_0\in\mathbb N$ such that $(2j+1)(2j+1+\frac{n-1}{g})-\frac{\lambda}{g^2}>0$ and $a_{2j+1}$ does not change sign for all $j\geq j_0$. Without loss of generality, assume $a_{2j+1}>0$ for all $j\geq j_0$. Since $\frac{n-1}{g}\geq 1$, we have, for all $j\geq j_0$
\begin{multline*}
a_{2j+1}\geq\frac{(2j+1)(2j+2)-\frac{\lambda}{g^2}}{(2j+3)(2j+2)}\cdots \frac{(2j_0+1)(2j_0+2)-\frac{\lambda}{g^2}}{(2j_0+3)(2j_0+2)}a_{2j_0+1}\\=\frac{2j_0+1}{2j+3}\prod_{\ell=j_0}^j\left(1-\frac {\lambda}{g^2(2\ell+1)(2\ell+2)}\right)
\end{multline*}
Since $\prod_{\ell=0}^{\infty}\left(1-\frac {\lambda}{g^2(2\ell+1)(2\ell+2)}\right)$ converges to a positive number, we deduce that there exists  $C>0$ such that $a_{2j+1}\geq\frac{C}{2j+3}$ for all $j\geq j_0$. Then $\lim_{x\rightarrow\pm 1}y(x)=\pm \infty$. Thus $a_{2j+1}$ must be zero for all $j\in\mathbb N$ and $y=P_k$.

\medskip

{\bf Part 3.} Assume now $\lambda\ne gk(gk+n-1)$ for all $k\in\mathbb N$ and let $y\ne 0$ be a bounded eigenfunction associated to $\lambda$. Then 
\begin{equation}\label{SW}
\int_{-1}^1(1-x)^{\frac{n-1}{2g}-\frac{c}{2g^2}-\frac{1}{2}}(1+x)^{\frac{n-1}{2g}+\frac{c}{2g^2}-\frac{1}{2}}y(x)P_k(x)dx=0
\end{equation}
for all $k\in\mathbb N$, where $P_k(x)$ is an eigenfunction associated to the eigenvalue $gk(gk+n-1)$, i.e., a polynomial of degree $k$. By the Stone-Weierstrass Theorem we know that $y$ can be uniformly approximated in $C^0([-1,1])$ by polynomials. Therefore we can replace $P_k$ by $y$ in \eqref{SW} and deduce that
$$
\int_{-1}^1(1-x)^{\frac{n-1}{2g}-\frac{c}{2g^2}-\frac{1}{2}}(1+x)^{\frac{n-1}{2g}+\frac{c}{2g^2}-\frac{1}{2}}y(x)^2dx=0
$$
hence $y=0$.

\end{proof}

\begin{rem}
Standard computations show that the polynomials defined by
\begin{equation}\label{polynomial}
P_k(x)=\frac{1}{(1-x)^{\frac{n-1}{2g}-\frac{c}{2g^2}-\frac{1}{2}}(1+x)^{\frac{n-1}{2g}+\frac{c}{2g^2}-\frac{1}{2}}}\frac{d^k}{d x^k}\left((1-x)^{\frac{n-1}{2g}-\frac{c}{2g^2}-\frac{1}{2}+k}(1+x)^{\frac{n-1}{2g}+\frac{c}{2g^2}-\frac{1}{2}+k}\right)
\end{equation}
are bounded eigenfunctions of \eqref{SSL2} associated with $\lambda_k=gk(gk+n-1)$. For $k=0,1,2,3$, we have
\begin{eqnarray*}
P_0(x)&=&1,\\
P_1(x)&=&\frac{c}{g^2}-\left(1+\frac{n-1}{g}\right)x\\
P_2(x)&=&\frac{c^2}{g^4}-\left(3+\frac{n-1}{g}\right)-\frac{2c}{g^2}\left(2+\frac{n-1}{g}\right)x+\left(2+\frac{n-1}{g}\right)\left(3+\frac{n-1}{g}\right)x^2\\
P_3(x)&=&\frac{c^3}{g^6}-\frac{c}{g^2}\left(13+\frac{3(n-1)}{g}\right)\\
&&+\left(3\left(3+\frac{n-1}{g}\right)\left(5+\frac{n-1}{g}\right)-\frac{3c^2}{g^4}\left(3+\frac{n-1}{g}\right)\right)x\\
&&+\frac{3c}{g^2}\left(3+\frac{n-1}{g}\right)\left(4+\frac{n-1}{g}\right)x^2\\
&&-\left(3+\frac{n-1}{g}\right)\left(4+\frac{n-1}{g}\right)\left(5+\frac{n-1}{g}\right)x^3.
\end{eqnarray*}
Note that for $c=0$, $P_k(x)$ is an even polynomial for $k$ even and it is odd for $k$ odd. 
\end{rem}

\begin{cor}\label{cor_sphere}
Let $\mathcal F$ be an isoparametric foliation of $\mathbb S^n$ with $g\ne 1$. Then every isoparametric tube of $\mathcal F$ fails the Pompeiu property.
\end{cor}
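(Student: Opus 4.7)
The plan is to read off this corollary as an immediate consequence of the two main tools already assembled in this section: the explicit description of ${\rm Spec}_1(\mathbb S^n)$ as the set $\{k(k+n-1):k\in\mathbb N\}$, and Theorem \ref{sphere_prop}, which identifies ${\rm Spec}(\mathbb S^n,\mathcal F)$ with the subset $\{gk(gk+n-1):k\in\mathbb N\}$. The key observation is purely arithmetic: if $g\ne 1$, then the index $k=1$ gives an eigenvalue $n$ of the Laplacian on $\mathbb S^n$ which is not of the form $g\ell(g\ell+n-1)$ for any $\ell\in\mathbb N$, since the smallest positive element of the radial spectrum is $g(g+n-1)>n$. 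Thus ${\rm Spec}(\mathbb S^n,\mathcal F)\subsetneq {\rm Spec}_1(\mathbb S^n)$.

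Once this proper inclusion is established, the conclusion is delivered by Theorem \ref{first_main} (i.e.\ Theorem \ref{firstmain}): picking any eigenvalue $\lambda\in {\rm Spec}_1(\mathbb S^n)\setminus {\rm Spec}(\mathbb S^n,\mathcal F)$ and any corresponding eigenfunction $f$ on $\mathbb S^n$, one has $\int_{h(\Omega_t)}f=0$ for every $t\in(0,\pi/g)$ and every isometry $h$ of $\mathbb S^n$. Since $f$ is not identically zero, this witnesses the failure of the Pompeiu property for the isoparametric tube $\Omega_t$.

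There is essentially no obstacle here: the corollary is a direct specialization of the general Theorem \ref{firstmain} to the spherical case, the entire content being that the hypothesis of a strict spectral inclusion is verified as soon as $g\ne 1$. The only matter to spell out is the arithmetic comparison of the two sequences $k(k+n-1)$ and $gk(gk+n-1)$, which is trivial. I would therefore present the proof in two short sentences: state the strict inclusion coming from Theorem \ref{sphere_prop} together with the known spectrum of $\mathbb S^n$, and invoke Theorem \ref{first_main} to conclude.
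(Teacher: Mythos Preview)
Your proposal is correct and follows essentially the same approach as the paper: invoke Theorem \ref{sphere_prop} to obtain ${\rm Spec}(\mathbb S^n,\mathcal F)=\{gk(gk+n-1):k\in\mathbb N\}\subsetneq\{k(k+n-1):k\in\mathbb N\}={\rm Spec}_1(\mathbb S^n)$ whenever $g\ne 1$, and then apply Theorem \ref{first_main}. Your explicit arithmetic check that $n\notin{\rm Spec}(\mathbb S^n,\mathcal F)$ (since $g(g+n-1)>n$ for $g\geq 2$) is a welcome addition that the paper leaves implicit.
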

\begin{proof}
The corollary follows from Theorem \ref{first_main}. If $g\ne 1$, then, from Theorem \ref{sphere_prop} we have ${\rm Spec}(\mathbb S^n,\mathcal F)=\left\{gk(gk+n-1):k\in\mathbb N\right\}\subsetneq{\rm Spec}_1(\mathbb S^n)=\left\{k(k+n-1):k\in\mathbb N\right\}$.
\end{proof}

When $g=1$ the isoparametric foliations are given by concentric spheres, and all the eigenvalues of the Laplacian are radial. In particular, up to isometries, $F={x_1}_{|_{\sphere n}}$. Any eigenspace contains exactly one radial eigenfunction (up to scalar multiplication).

\medskip

When $g=2$, $n\geq 3$ isoparametric foliations are given by Clifford tori. Namely, up to isometries, $F=p_{|_{\sphere n}}$, where $p(x)=\sum_{i=1}^{\ell}x_i^2-\sum_{i=\ell}^{\ell+m}x_i^2$, $n+1=\ell+m$, $\ell,m>1$. The radial eigenvalues are $\{2k(2k+n-1):k\in\mathbb N\}$.

\medskip

As already mentioned, Theorems \ref{first_main} and \ref{s_f} hold also in the case of non proper isoparametric foliations. In this case, the focal sets can have codimension $1$, or they can be disconnected. For example, on $\sphere n\subset\real {n+1}$ we consider the function $F={(1-x_1^2)^{\frac{k}{2}}}_{|_{\sphere n}}$, where $x=(x_1,..,x_{n+1})$ denotes a point in $\real {n+1}$, $k\in\mathbb N$. For $k\geq 2$ the function $F$ is isoparametric but it is not proper. For simplicity, let us consider $k=2$. If $x_0=(1,0,\cdots,0)$, then $F=\sin(\rho(x))^2$, where $\rho(x)={\rm dist}(x,x_0)$. The focal sets are $M^+=\{x_0\}\cup\{-x_0\}$ and $M^-=\{\rho(x)=\frac{\pi}{2}\}$ (i.e., the equator). We see that one focal set is disconnected, while the other has codimension $1$.

\medskip

Isoparametric tubes $\Omega_t$ with boundary $F^{-1}(t)$, $t\in\left(0,\frac{\pi}{2}\right)$, are either equidistant bands around the equator, or opposite congruent spherical caps. Theorems \ref{first_main} and \ref{s_f} hold with no modifications. In particular, one can verify that ${\rm Spec}(\sphere n,\mathcal F)\subsetneq{\rm Spec}_1(\sphere n)$, hence $\Omega_t$ fails the Pompeiu property for all $t\in\left(0,\frac{\pi}{2}\right)$. In fact the coordinate functions ${x_i}_{|_{\sphere n}}$ are not in the radial spectrum and integrate zero over all $\Omega_t$. In particular, the origin of $\real {n+1}$ is always the barycenter of $\Omega_t$ (see also Section \ref{sec:even}).

\section{A simple argument using antipodal invariance}\label{sec:even}

In this section we show, by very simple arguments,  that the Pompeiu property fails for a rather large class of spherical domains, namely, those domains which are invariant under the antipodal map. In what follows, we call a function $f:\mathbb R^{n+1}\rightarrow\mathbb R$ {\it linear} if it is of the form $f(x_1,...,x_{n+1})=\sum_ja_jx_j$ for some $a_1,...,a_{n+1}\in\mathbb R$.

\begin{prop}\label{peven} Let $\Omega$ be an antipodal invariant domain in $\sphere n$. Then:

\begin{enumerate}[i)]

\item One has $\int_{\Omega}u=0$ for all linear functions $u$. In particular, the barycenter of $\Omega$ is always the origin.

\item  $\Omega$ fails the Pompeiu property.
\end{enumerate}
\end{prop}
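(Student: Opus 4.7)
The proof plan relies on a single, very clean observation: every linear function on $\mathbb{R}^{n+1}$, restricted to $\sphere n$, is antipodally odd, while the isometry group $O(n+1)$ of the sphere commutes with the antipodal map.

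For part (i), I would use the change of variables $x \mapsto -x$. Since $\Omega$ is antipodal invariant, i.e.\ $-\Omega = \Omega$, and since this map is an isometry of $\sphere n$ preserving the Riemannian measure, for any linear function $u$ on $\real{n+1}$ one gets
$$
\int_\Omega u(x)\,dx = \int_{-\Omega} u(-y)\,dy = -\int_\Omega u(y)\,dy,
$$
so $\int_\Omega u = 0$. Applying this to the coordinate functions $u = x_j$, $j=1,\dots,n+1$, immediately shows that the barycenter of $\Omega$ is the origin of $\real{n+1}$.

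For part (ii), the key point is that the antipodal map $\sigma : x \mapsto -x$ lies in the center of $O(n+1)$, so it commutes with every isometry $h$. Consequently
$$
-h(\Omega) = h(-\Omega) = h(\Omega),
$$
i.e.\ $h(\Omega)$ is antipodal invariant for every isometry $h$. Therefore part (i), applied to $h(\Omega)$ in place of $\Omega$, yields $\int_{h(\Omega)} u = 0$ for every linear function $u$ and every $h \in O(n+1)$. Taking $f$ to be any nonzero linear function (e.g.\ $f(x) = x_1$) restricted to $\sphere n$, we obtain a continuous nonzero function with $\int_{h(\Omega)} f = 0$ for all isometries $h$, which is exactly the statement that $\Omega$ fails the Pompeiu property.

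I do not expect a real obstacle here: the whole argument is a symmetry argument, and the main ingredient is that $-\mathrm{id} \in O(n+1)$ is central. Note that the linear functions furnish the first nontrivial eigenspace of the Laplacian on $\sphere n$ (eigenvalue $n$), so the ``test functions'' used to defeat the Pompeiu property are in fact first eigenfunctions; this fits nicely with the general philosophy of Theorem \ref{firstmain}, where one defeats Pompeiu by exhibiting an eigenfunction that integrates to zero on all congruent copies of the domain.
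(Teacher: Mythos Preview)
Your proof is correct and essentially matches the paper's argument. The only cosmetic difference is in part (ii): you show that $h(\Omega)$ is again antipodal invariant (using that $-\mathrm{id}$ is central in $O(n+1)$) and then apply (i) to $h(\Omega)$, whereas the paper writes $\int_{h(\Omega)}u=\int_{\Omega}u\circ h$ and observes that $u\circ h$ is again linear---two equivalent formulations of the same symmetry.
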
 

\begin{proof} 
$i)$ For any isometry $\sigma$ of the sphere and any function $u$ one has
$
\int_{\sigma(\Omega)}u=\int_{\Omega}u\circ \sigma.
$
Let $\sigma(x)=-x$ be the canonical involution; it is an isometry and by assumption $\sigma(\Omega)=\Omega$. Now, for any linear function $u$, we have $u\circ\sigma=-u$; this implies that:
$$
\int_{\Omega}u=\int_{\sigma(\Omega)}u
=\int_{\Omega}u\circ\sigma
=-\int_{\Omega}u
$$
which shows the assertion. 

\smallskip

$ii)$ We know that the space of linear functions is invariant by the isometry group of $\sphere n$ (it corresponds to the eigenspace associated to $\lambda_2(\sphere n)$). Fix a linear function $u$ and let $h$ be any isometry. Then:
$$
\int_{h(\Omega)}u=\int_{\Omega}u\circ h=0
$$
because $u\circ h$ is linear as well. 
\end{proof}

A simple consequence is that, if $F:\real{n+1}\to \reals$ is any polynomial function of even degree, and if
$
\Omega=\{x\in\sphere n: F(x)<c\}
$
for some $c$, then $\Omega$ is antipodal invariant simply because $F(x)=F(-x)$ for all $x\in\sphere n$. Since any isoparametric tube with $g$ even is of that type ( its Cartan polynomial has in fact degree $g$), we immediately get

\begin{cor} Any isoparametric tube with $g=2,4,6$ fails the Pompeiu property. 
\end{cor}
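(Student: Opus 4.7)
The plan is to deduce this as an immediate consequence of Proposition \ref{peven} by showing that, when $g$ is even, every isoparametric tube in $\sphere n$ is invariant under the antipodal map $\sigma : x \mapsto -x$.

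First, I would recall from the beginning of Section \ref{sec:sphere} that every isoparametric tube $\Omega_t$ with $g$ distinct principal curvatures is a sublevel set on $\sphere n$ of a Cartan polynomial $p$, namely one satisfying \eqref{cartanp}. The key observation is that the second equation in \eqref{cartanp}, $|\overline\nabla p|^2 = g^2 |x|^{2g-2}$, together with the first one, forces $p$ to be homogeneous of degree $g$ on $\real{n+1}$. Consequently,
\[
p(-x) = (-1)^g\, p(x) \qquad \text{for all } x \in \real{n+1}.
\]

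Next, assume $g \in \{2,4,6\}$, so $g$ is even. Then the identity above becomes $p(-x) = p(x)$, i.e., $p$ is an even function. Since $\Omega_t = \{x \in \sphere n : p(x) < c_t\}$ for a suitable constant $c_t$, we conclude $\sigma(\Omega_t) = \Omega_t$; in other words $\Omega_t$ is antipodal invariant.

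Applying Proposition \ref{peven} (part \emph{ii)}) to the antipodal invariant domain $\Omega_t$ yields that $\Omega_t$ fails the Pompeiu property, which is the claim. There is no real obstacle in the argument: the classification of isoparametric foliations via Cartan polynomials supplies the evenness of $p$ for free once $g$ is even, and the antipodal invariance trick of Proposition \ref{peven} does the rest. The only subtle point is to notice that homogeneity of the defining Cartan polynomial, rather than merely its even degree as a polynomial, is what guarantees $p(-x)=p(x)$.
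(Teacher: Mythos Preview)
Your argument is correct and is exactly the route the paper takes: show that for even $g$ the Cartan polynomial is an even function, hence every isoparametric tube is antipodal invariant, and then invoke Proposition~\ref{peven}\,(ii). Your remark that it is the \emph{homogeneity} of the Cartan polynomial (and not merely its even degree) that yields $p(-x)=p(x)$ is in fact a useful sharpening of the paper's phrasing; otherwise the two proofs are identical.
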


It remains to deal with the cases $g=1$ and $g=3$. When $g=1$ we are dealing with the standard isoparametric foliation, and isoparametric tubes are geodesic balls:  by the Freak Theorem (or Theorem \ref{chs}) we know that in that case $\Omega=B(r)$ fails the Pompeiu property only for $r$ in a countable and dense subset of $(0,\pi)$. In  the case $g=3$ (in which case isoparametric tubes are never antipodal invariant),  the fact that all isoparametric tubes fail the Pompeiu property is a consequence of Theorem \ref{first_main}, as explained in Corollary \ref{cor_sphere}.

\section{An upper bound for the first positive eigenvalue of isoparametric hypersurfaces}\label{yau}

A well known conjecture of Yau \cite{yau} states that the first positive eigenvalue $\lambda_2(\Sigma)$ of any embedded minimal hypersurface $\Sigma$ of $\sphere n$ is $n-1$. In \cite{tang_yan_yauconj_2,tang_yan_yauconj_1} Yau's conjecture is proved for minimal isoparametric hypersurfaces. Theorem \ref{first_main} and Theorem \ref{sphere_prop} allow to prove that for $g\ne 1$ the minimal regular hypersurface is the maximizer of $\lambda_2(\Sigma)$ in the isoparametric family.

When $g=1$ isoparametric hypersurfaces are geodesic spheres and $n-1$ is actually a minimizer for $\lambda_2(\Sigma)$ which takes values in $[n-1,+\infty)$.

\begin{theorem}\label{yauth}
Let $\Sigma$ be a connected isoparametric hypersurface of $\sphere n$ with $g>1$ distinct principal curvatures. Then 
$$
\lambda_2(\Sigma)\leq n-1.
$$
Equality holds if and only if $\Sigma$ is minimal.
\end{theorem}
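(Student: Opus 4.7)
The plan is a classical Hersch--Yang--Yau type upper bound, using the coordinate functions $x_1,\dots,x_{n+1}$ of $\real{n+1}$ restricted to $\Sigma$ as test functions in the variational characterization of $\lambda_2(\Sigma)$. Two ingredients are required: first, that each $x_i|_\Sigma$ has zero mean on $\Sigma$ (so that it is admissible as a test function for $\lambda_2$); second, the standard identity $\sum_i |\nabla^\Sigma x_i|^2 = n-1$ obtained from projecting the ambient gradients onto $T\Sigma$.

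The zero-mean property is precisely where the non-Pompeiu hypothesis $g>1$ enters. Fix an isoparametric foliation $\mathcal F$ of $\sphere n$ having $\Sigma$ as a regular leaf. By Theorem \ref{sphere_prop}, the radial spectrum is $\{gk(gk+n-1):k\in\mathbb N\}$, whose smallest nonzero value $g(g+n-1)$ strictly exceeds $n$ when $g\geq 2$. Hence $n=\lambda_2(\sphere n)\in{\rm Spec}_1(\sphere n)\setminus{\rm Spec}(\sphere n,\mathcal F)$. As in the proof of Theorem \ref{first_main}, the radialization of any eigenfunction with a non-radial eigenvalue must vanish identically; applied to $x_i|_{\sphere n}$, which is an eigenfunction with eigenvalue $n$, this yields $\int_{\rho^{-1}(t)}x_i=0$ for every $t\in(0,D(\mathcal F))$, and in particular $\int_\Sigma x_i\,d\sigma=0$.

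With this, the variational characterization of $\lambda_2(\Sigma)$ gives $\lambda_2(\Sigma)\int_\Sigma x_i^2\leq \int_\Sigma|\nabla^\Sigma x_i|^2$ for each $i$. Summing over $i$ and using $\sum_i x_i^2 = 1$ on $\sphere n$, together with
\[
\sum_i |\nabla^\Sigma x_i|^2 = \sum_i |\nabla^{\sphere n} x_i|^2 - \sum_i \scal{\nabla^{\sphere n} x_i}{N}^2 = n - |N|^2 = n-1,
\]
where $N$ is the unit normal of $\Sigma$ in $\sphere n$ (here $|\nabla^{\sphere n} x_i|^2 = 1 - x_i^2$ and $\scal{\nabla^{\sphere n} x_i}{N}=N_i$, using $\scal{x}{N}=0$), one deduces $\lambda_2(\Sigma)|\Sigma|\leq(n-1)|\Sigma|$, hence $\lambda_2(\Sigma)\leq n-1$.

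For the equality characterization, the standard restriction formula, combined with $\Delta^{\sphere n}x_i = nx_i$ and $\mathrm{Hess}^{\sphere n}x_i = -x_i\,g_{\sphere n}$, gives $\Delta^\Sigma(x_i|_\Sigma) = (n-1)x_i|_\Sigma - H\,N_i$, where $H$ is the (constant) mean curvature of $\Sigma$ in $\sphere n$. If $\lambda_2(\Sigma)=n-1$, equality in the summed Rayleigh quotient forces each nonzero $x_i|_\Sigma$ to be a $\lambda_2$-eigenfunction, i.e.\ $H\,N_i\equiv 0$ for every $i$; since $N$ is a unit vector at each point of $\Sigma$, some component $N_i$ is nonzero, forcing $H=0$, so $\Sigma$ is minimal. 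Conversely, for minimal isoparametric $\Sigma$, the Tang--Yan verification of Yau's conjecture \cite{tang_yan_yauconj_2,tang_yan_yauconj_1} yields $\lambda_2(\Sigma)=n-1$. The main conceptual step is the zero-mean claim, which is precisely what fails for $g=1$ and which isolates the $g>1$ case from the geodesic ball situation.
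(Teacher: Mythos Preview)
Your proof is correct and follows essentially the same approach as the paper's: both use the coordinate functions $x_i$ as test functions, establish $\int_\Sigma x_i=0$ via the fact that $n\notin{\rm Spec}(\sphere n,\mathcal F)$ for $g>1$, and sum the Rayleigh inequalities using $\sum_i|\nabla^\Sigma x_i|^2=n-1$. The only difference is in the equality case $\lambda_2(\Sigma)=n-1\Rightarrow\Sigma$ minimal: the paper invokes Takahashi's theorem \cite{takahashi}, whereas you compute $\Delta^\Sigma x_i=(n-1)x_i-HN_i$ directly and conclude $H=0$---this is effectively Takahashi's argument written out by hand, so the two are equivalent (one small point: your step ``$HN_i\equiv0$ for every $i$'' tacitly uses that no $x_i|_\Sigma$ vanishes identically, which holds since $g>1$ rules out $\Sigma$ being a great sphere).
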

\begin{proof}
The restrictions of the coordinate functions $x_1,...,x_{n+1}$ to $\sphere n$ are eigenfunctions of the Laplacian on $\sphere n$ with eigenvalue $n$. From Proposition \ref{sphere_prop} it follows that if $g>1$ then $n\notin{\rm Spec}(\sphere n,\mathcal F)$. From Theorem \ref{first_main} we deduce that
$$
\int_{\Sigma}x_i=0\,,\ \ \ i=1,...,n+1.
$$
Then
\begin{equation}\label{yau1}
\lambda_2(\Sigma)\leq\frac{\int_{\Sigma}|\nabla_{\Sigma}x_i|^2}{\int_{\Sigma}x_i^2}\,,\ \ \ i=1,...,n+1.
\end{equation}
Here $\nabla_{\Sigma}$ denotes the gradient on $\Sigma$. Multiplying both sides of \eqref{yau1} by $\int_{\Sigma}x_i^2$ and summing over $i=1,...,n+1$ we obtain
$$
|\Sigma|\lambda_2(\Sigma)\leq\int_{\Sigma}\sum_{i=1}^{n+1}|\nabla_{\Sigma} x_i|^2.
$$
The result follows since
$$
\sum_{i=1}^{n+1}|\nabla_{\Sigma}x_i|^2=\sum_{i=1}^{n+1}\left(|\overline\nabla x_i|^2-|\langle\overline\nabla x_i, x\rangle|^2-|\langle\overline\nabla x_i,\nabla\rho(x)\rangle|^2\right)=n-1.
$$
Here $\rho(x)={\rm dist}(x,M^+)$ with $M^+$ a focal set of $\Sigma$, ${\rm dist}$ and $\nabla$ are the usual distance and gradient on $\sphere n$ and $\overline\nabla$ is the standard gradient of $\mathbb R^{n+1}$.

\medskip

If $\Sigma_{min}$ is minimal then from \cite{tang_yan_yauconj_2} we have $\lambda_2(\Sigma_{min})=n-1$.  Conversely, if $\lambda_2(\Sigma)=n-1$, then $x_1,...,x_{n+1}$ is a set of corresponding eigenfunctions. A classical result by Takahashi  \cite{takahashi} implies that $\Sigma_{min}$ is minimal.
\end{proof}

%\begin{appendices}
\section{One-dimensional case and the role of compactness}\label{A}

We have seen from \cite{berenstein_zalcman_1,ungar} (alternatively, from Theorems \ref{sf} and \ref{chs}) that there exists a countable and dense set of radii $S\subsetneq(0,\pi)$ such that the spherical cap  $B(r)\subset\mathbb S^n$ (the ball of radius $r$) fails the Pompeiu property whenever $r\in S$, while it has the Pompeiu property whenever $r\notin S$. On the other hand, on $\mathbb R^n$ all balls fail the Pompeiu property. As already observed, this difference is quite striking and is certainly one of the reasons why Ungar, the first to have noticed it for $\mathbb S^2$, called this result the ``Freak Theorem''.

\medskip

This difference seems to be intimately connected with the compactness of the space, and consequently with the discreteness of the spectrum of the Laplacian. We want to provide here a very simple interpretation of this fact, based on elementary computations in one dimension.

\medskip

First, we note that all balls in $\mathbb R$ (i.e., all segments) fail the Pompeiu property, exactly as it happens in $\mathbb R^n$. In fact, for all values of $\alpha>0$ we find non-zero functions $f$ such that
$$
\int_c^{c+2\alpha}f(x)dx=0
$$
for all $c\in\mathbb R$. It is sufficient to take any $2\alpha$-periodic function on $\mathbb R$ which integrates to zero on $[0,2\alpha]$, for example
$f(x)=a \cos(\frac{\pi}{\alpha}x)+b\sin(\frac{\pi}{\alpha}x)$. In this case we note that the spectrum of the Laplacian on $\mathbb R$ is the whole interval $[0,+\infty)$, and $f$ is a generalized eigenfunction with eigenvalue $\frac{\pi^2}{\alpha^2}$.

\medskip

On the other hand, Ungar's Freak Theorem can be appreciated already in the case of $\mathbb S^1$, and the corresponding proof is an easy exercise. 

\begin{prop}
Let $\alpha\in(0,\pi)$ and let $f$ be a continuous functions on $\mathbb S^1$ such that $\int_c^{c+2\alpha}f(x)dx=0$ for all $c\in(-\pi,\pi)$. Then $f=0$  unless $\frac{\alpha}{\pi}\in (0,1)\cap\mathbb Q$. 
\end{prop}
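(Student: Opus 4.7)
The plan is to exploit the identity the integral condition implies for $f$, reducing the problem to the density of a subgroup of $\mathbb{R}$. Let $f$ be continuous on $\mathbb{S}^1$ with $\int_c^{c+2\alpha} f(x)\, dx = 0$ for all $c$. Since $f$ is continuous, the function $F(c) := \int_c^{c+2\alpha} f(x)\, dx$ is $C^1$, and differentiating in $c$ gives
\begin{equation*}
f(c+2\alpha) - f(c) = F'(c) = 0 \qquad \text{for all } c.
\end{equation*}
Thus $f$ is $2\alpha$-periodic on $\mathbb{R}$, and of course it is also $2\pi$-periodic (being a function on $\mathbb{S}^1$).

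Next I would invoke Kronecker's theorem: if $\alpha/\pi \notin \mathbb{Q}$, the additive subgroup $\Lambda := 2\alpha\mathbb{Z} + 2\pi\mathbb{Z} \subset \mathbb{R}$ is dense. Since $f$ is invariant under every translation by an element of $\Lambda$, continuity of $f$ forces it to be constant on $\mathbb{R}$. Substituting this back into the vanishing of $F(c)$ yields $2\alpha\, f \equiv 0$, hence $f \equiv 0$, as required.

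For completeness I would sketch sharpness: when $\alpha/\pi = p/q$ with $p,q$ coprime, the function $f(x) = \cos(qx)$ is a continuous non-zero function on $\mathbb{S}^1$ and
\begin{equation*}
\int_c^{c+2\alpha} \cos(qx)\, dx = \frac{2}{q}\sin(q\alpha)\cos(q(c+\alpha)) = 0,
\end{equation*}
since $q\alpha = p\pi$. Thus the rationality hypothesis cannot be weakened. I do not anticipate any real obstacle here; the only conceptual step is recognizing the link between the integral condition and the density of the subgroup generated by the two natural periods $2\alpha$ and $2\pi$. A cleaner but essentially equivalent route is to expand $f = \sum_n c_n e^{inx}$ in Fourier series, compute $\int_c^{c+2\alpha} f\, dx = 2\alpha c_0 + \sum_{n\ne 0} c_n \tfrac{e^{2in\alpha}-1}{in} e^{inc}$, and observe that uniqueness of Fourier expansions in $c$ forces $c_0 = 0$ and $c_n(e^{2in\alpha}-1) = 0$ for $n \ne 0$; if $\alpha/\pi$ is irrational then $e^{2in\alpha}\ne 1$ for every $n\ne 0$, so all $c_n$ vanish.
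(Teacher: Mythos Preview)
Your proof is correct, and your primary argument takes a genuinely different route from the paper's. The paper proceeds entirely via Fourier expansion: it writes $f(x)=a_0+\sum_{j\geq 1}(a_j\cos jx+b_j\sin jx)$, computes $\int_c^{c+2\alpha}f$ as a Fourier series in $c$, and reads off that each coefficient must vanish; since $\sin(j\pi s)\neq 0$ for all $j\geq 1$ when $s=\alpha/\pi$ is irrational, this forces $a_j=b_j=0$. This is exactly the alternative you sketch at the end with complex exponentials.

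Your main argument is more elementary and arguably cleaner for this one-dimensional statement: differentiating the integral condition gives the double periodicity $f(c+2\alpha)=f(c)$ and $f(c+2\pi)=f(c)$, and Kronecker's theorem on the density of $2\alpha\mathbb Z+2\pi\mathbb Z$ when $\alpha/\pi\notin\mathbb Q$ forces $f$ to be constant, hence zero. This avoids any spectral machinery. On the other hand, the Fourier-series approach is the one that sits naturally inside the paper's framework: it identifies the obstruction explicitly as the vanishing of $\sin(j\alpha)$, which is the one-dimensional analogue of the ``radial eigenvalue'' condition driving the higher-dimensional results, and it generalizes directly to the addition-formula arguments used for two-point homogeneous spaces. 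Your density argument, by contrast, does not extend in any obvious way beyond dimension one.
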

\begin{proof}
We write
\begin{equation}\label{exp1}
f(x)=a_0+\sum_{j=1}^{\infty}(a_j\cos(jx)+b_j\sin(jx)),
\end{equation}
with $a_0^2+\sum_{j=1}^{\infty}(a_j^2+b_j^2)<+\infty$. Then
\begin{equation}\label{cond1}
\int_c^{c+2\alpha}f(x)dx=2a_0\alpha+\sum_{j=1}^{\infty}\frac{2}{j}\sin\left(j\alpha\right)\left(a_j(\cos(j(c+\alpha)))+b_j(\sin(j(c+\alpha)))\right).
\end{equation}

If $\frac{\alpha}{\pi}=\frac{m}{n}\in (0,1)\cap\mathbb Q$ ($m,n\in\mathbb N$, $n\ne 0$), we choose $f(x)=a_n\cos(nx)+b_n\sin(nx)$, $(a_n,b_n)\ne (0,0)$. The sum \eqref{cond1} is then zero for all $c\in(-\pi,\pi)$, i.e., arcs of length $2\alpha$ fail the Pompeiu property.

\medskip

Let now $\alpha=\pi s$ with $s\in(0,1)\setminus\mathbb Q$. We set
\begin{equation}\label{ga0}
g(c):=\int_c^{c+2\alpha}f(x)dx=2\pi s a_0+\sum_{j=1}^{\infty}\frac{2}{j}\sin(j\pi s)\left(a_j(\cos(jc+j\pi s)))+b_j(\sin(jc+j\pi s))\right).
\end{equation}
 Then $g(c)=A_0+\sum_{j=1}^{\infty}A_j\cos(jc)+B_j\sin(jc)$, where the coefficients are given by $A_0=a_0\alpha$, $A_j=\frac{2\sin(\pi s)}{j}\left(a_j\cos(j\pi s)+b_j\sin(j\pi s)\right)$, $B_j=\frac{2\sin(\pi s)}{j}\left(b_j\cos(j\pi s)-a_j\sin(j\pi s)\right)$. Assume $g(c)=0$ for all $c\in(-\pi,\pi)$. The series defining $g$ is absolutely convergent hence $A_0=0$ and $A_j=B_j=0$ for all $j\in\mathbb N$. Since $\sin(j\pi s)\ne 0$, this happens if and only if $a_0=0$ and $a_j=b_j=0$ for all $j\geq 1$. Therefore $f=0$.
\end{proof}
Note that in the case of $\mathbb S^1$ and the foliation with focal set given by a point, the set $S(\mathcal F)$ of Theorem \ref{sf} is $\{\pi s:s\in(0,1)\cap\mathbb Q\}$. Then balls (arcs) fail the Pompeiu problem only for a countable and dense subset. The corresponding functions integrating zero on all arcs of length $2\alpha=\frac{2\pi m}{n}$ ($m,n\in\mathbb N$, $n\ne 0$) are the eigenfunctions of the Laplacian on $\mathbb S^1$ corresponding to the eigenvalues $\ell^2$, for $\ell\in\mathbb N$, $\ell\geq n$.

\medskip

We conclude this section with a brief discussion on other (non proper) isoparametric foliations on $\sphere 1$ for which the corresponding tubes are not connected and yet the results of Theorem \ref{firstmain} hold as well. These examples may be not interesting per se, however they are useful to have a simple interpretation of Theorem \ref{firstmain} and Corollary \ref{corsphere}.

\medskip

Let $x\in\sphere 1$ and let $F(x)=\cos(kx)$. The function $F$ is isoparametric and the corresponding tubes of radius $t\in(0,\pi/k)$ with focal points at $F=\pm 1$ are given by $\Omega_t(0)$, where
$$
\Omega_t(c):=\bigcup_{j=0}^{k-1}\left(c+\frac{\pi}{2k}+\frac{j\pi}{k}-t,c+\frac{\pi}{2k}+\frac{j\pi}{k}+t\right),
$$
Choosing as test functions $f(x)= a \cos(\ell x)+b\sin(\ell x)$ we see that $\int_{\Omega_t(c)}f(x)dx=0$ for all $c\in(0,\pi/k)$ if and only if $\frac{\ell^2}{k^2}\ne m^2$, with $m\in\mathbb N$, and we can always find such an $\ell$ when $k\geq 2$. We remark that this condition is exactly the analogous of the condition described in Corollary \ref{cor_sphere}, namely $\{gm(gm+n-1):m\in\mathbb N\}\subsetneq\{\ell(\ell+n-1):\ell\in\mathbb N\}$ with $n=1$ and $g=k$.

%\end{appendices}

%%%

\section*{Acknowledgements}
The authors are members of the Gruppo Nazionale per le Strutture Algebriche, Geometriche e le loro Applicazioni (GNSAGA) of the I\-sti\-tuto Naziona\-le di Alta Matematica (INdAM). 

\bibliography{bibliography}
\bibliographystyle{abbrv}

\vspace{1.0cm}

Luigi Provenzano and Alessandro Savo, \\
Dipartimento di Scienze di Base e Applicate per l'Ingegneria, \\
Sapienza Universit\`{a} di Roma, \\
Via Antonio Scarpa,  16, 
00161 Roma, 
Italy.
\smallskip

e-mail:	alessandro.savo@uniroma1.it \\
e-mail: luigi.provenzano@uniroma1.it

%%%%%%%%%%%%%%%%%%%%%%%%%%%%%%%%%%%%%%%%%%%%%%%%%%%%%%%%%%%%%%%%%%%%%%%%%%%%%%%%%%%%%%%%%%%%%%%%%%%%%%%%%%%%%%%%%%%%%%%%%%%%%%%%%%%%%%%%%%%%%%%%%%%%%%%%%%%%%%%%%%%%%%%%%%%%%%%%%%%%%%%%%%%%%%%%%%%%%%%%%%%%%%%%%%

\end{document}